\newtheorem{theorem}{Theorem}[section]
\newtheorem{lemma}[theorem]{Lemma}
\newtheorem{proposition}[theorem]{Proposition}
\newtheorem{corollary}[theorem]{Corollary}
\newtheorem{example}[theorem]{Example}
\newtheorem{conjecture}[theorem]{Conjecture}
\theoremstyle{definition}
\newtheorem{definition}[theorem]{Definition}
\newcommand{\BLUE}[1]{{\color{blue}#1}}
\newcommand{\RED}[1]{{\color{red}#1}}
\newcommand{\Z}{\mathbb{Z}}
\newcommand{\N}{\mathbb{N}}
\newcommand{\T}{\mathcal{T}}
\newcommand{\instar}{\operatorname{instar}}
\title{\textbf{LABELED CHIP-FIRING ON STAR GRAPHS}}
\author[1]{Annika Gonzalez-Zugasti}
\author[1]{Ryan Lynch}
\author[2]{Dylan Snustad}
\affil[1]{University of Minnesota - Twin Cities}
\affil[2]{Arizona State University}
\date{\today}
\begin{document}
\setlength{\abovedisplayskip}{10pt}
\setlength{\belowdisplayskip}{10pt}
\setlength{\abovedisplayshortskip}{10pt}
\setlength{\belowdisplayshortskip}{10pt}
	
	\maketitle
	
	\begin{abstract}
	\justify
	We study the stable configurations of the labeled chip-firing game on an infinitely subdivided $k$-star graph starting with $km$ chips on the center vertex. We prove a sorting property of this game and analyze special stable configurations corresponding to standard Young tableaux.
		
		\textit{Keywords:} Labeled chip-firing, star graph, Catalan numbers, standard Young tableaux
	\end{abstract}
	
	\section{Introduction}\label{sec:intro}
	
	\justify
	In the traditional chip-firing game, identical ``chips'' are placed on each vertex of a graph. If a vertex has at least as many chips as neighbors, it can ``fire'' by sending one chip to each neighbor. The game ends when no vertices have enough chips to fire. In this case, the resulting arrangement of chips on the graph is called a stable configuration. This discrete dynamical system has connections to other areas of combinatorics, algebra, and physics. 

    Labeled chip-firing is a variation of this game, due to \cite{Hopkins_2017}, that assigns labels $1$ through $N$ for each chip on the graph and specifies additional rules for how these labeled chips must move during a vertex fire. In the most well-known version of this game, $N$ chips are placed on a single vertex in an infinite line graph. When a vertex fires in this setup, the smaller chip must move to the left and the larger chip must move to the right. In \cite{Hopkins_2017} it was shown that when starting with $2m$ chips, the resulting stable configuration will always be the same, and will result in the chips labeled $1$ through $2m$ being placed in order from left to right on the graph.
    
    In 2022, Musiker and Nguyen proved a similar sorting property for the case where the base graph is a binary tree with $2^n-1$ labeled chips starting on the origin:

    \begin{theorem}[{\cite[Theorem 1.2]{musiker2023labeledchipfiringbinarytrees}}]
        In labeled chip-firing on an infinite binary tree with $2^n-1$ chips (labeled from $1$ to $2^n-1$) initially placed at the origin, the terminal configuration always has one chip at each node of the first $n$ levels. Moreover, the bottom straight left and right descendants of a node contain the smallest and largest chips among its subtree.
    \end{theorem}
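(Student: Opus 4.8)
The plan is to separate the two assertions: the \emph{shape} claim (one chip at each node of the first $n$ levels) and the \emph{labeling} claim (the straight-left and straight-right descendants carry the extreme labels). The shape depends only on the unlabeled dynamics, so I would attack it first using the abelian property of chip-firing: the number of times each vertex fires (the odometer) is independent of the firing order. I would \emph{guess} the terminal configuration $c$---one chip on each node in levels $0,\dots,n-1$ and none below---together with a level-only odometer $u_\ell$, and then verify the chip-conservation identities
\[
c(v) = c_0(v) - \deg(v)\, u_{\ell(v)} + \sum_{w \sim v} u_{\ell(w)}
\]
level by level. Solving the resulting linear recurrence from the leaves upward (with $u_n = 0$) forces $u_{n-1} = 0$ and, writing $u_{n-k} = 2^{k} - k - 1$, pins down every firing count; in particular the level-$(n-1)$ nodes never fire and no chip ever reaches level $n$. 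Since $c$ is stable and the identities admit a non-negative solution, the least-action/uniqueness principle for sandpiles identifies $c$ as \emph{the} terminal configuration for every firing order, settling the shape claim.

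For the labeling I would first isolate the two extreme labels, which behave rigidly. Because the sorting rule always sends the smallest chip present at a firing node to its left child (and the largest to its right child), the global minimum $1$ can only ever move down-and-left: whenever its host fires, $1$ is the smallest chip there and is pushed to the left child, never up or right. Using the odometer---every node on the left spine at levels $0,\dots,n-2$ fires at least once while the level-$(n-1)$ node never fires---I would conclude that chip $1$ descends the left spine exactly to the leftmost level-$(n-1)$ node, i.e.\ the straight-left descendant of the root. The left--right reflection of the tree combined with the relabeling $i \mapsto 2^n - i$ (a symmetry of the rule) gives the same statement for chip $2^n-1$ and the right spine. Crucially, this argument uses only the local rule and the order-independent odometer, so it holds for every firing order.

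To obtain the ``moreover'' for an arbitrary node $v$, I would induct on $n$ via the self-similar structure of the tree: after the activity at the top has distributed the chips, the left subtree should realize an $(n-1)$-instance on a block of the smallest labels and the right subtree an $(n-1)$-instance on the largest labels, after which the inductive hypothesis places each subtree's extremes on the appropriate spines. The main obstacle---and the part I expect to require the most care---is that a subtree does not evolve in isolation: chips flow upward from a child to its parent (the ``median'' chip of a firing) and back down again, so the restriction of the game to a subtree is not literally a smaller copy of the game, and a priori the block of labels captured by each subtree could depend on the firing order. I would address this by strengthening the inductive statement to track the multiset of labels each subtree ultimately retains, and by proving a commutation (abelian-type) lemma for the \emph{labeled} game showing that the final set of labels in each subtree, together with the positions of its minimum and maximum, is order-independent---reducing the interaction with the parent to a controlled exchange that the spine-descent argument above can absorb. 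Establishing this order-independence for the labeled rather than merely the unlabeled dynamics is the crux of the proof.
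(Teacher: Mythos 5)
This statement is not proved in the paper at all: it is Musiker and Nguyen's Theorem 1.2, quoted verbatim in the introduction as motivation, so there is no in-paper proof to compare against. The paper's own contribution is the analogous sorting theorem for star graphs (Theorem \ref{thm:sorting}), proved via a poset of ``endgame fires'' adapted from Klivans--Liscio. Judging your proposal on its own terms: the unlabeled shape claim via the odometer and the least-action principle is sound (your firing counts $u_{n-k}=2^k-k-1$ do satisfy the conservation identities), and the descent of chips $1$ and $2^n-1$ down the root's two spines is essentially right, modulo checking that chip $1$ is never the single chip left stranded at an intermediate spine node rather than being fired onward.

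The genuine gap is in your plan for the ``moreover'' at an arbitrary node. Your induction rests on the left and right subtrees capturing contiguous blocks of the smallest and largest labels, and on the ``commutation lemma'' that the set of labels a subtree ultimately retains is independent of the firing order. Neither should be expected to hold: the whole point of Musiker--Nguyen's analysis (and of the enumeration questions in this paper) is that the terminal \emph{labeling} is not confluent --- different stabilization sequences produce genuinely different terminal configurations, and the labels landing in a given subtree need not form an interval. So the lemma you identify as the crux is most likely false as stated, and the self-similarity reduction does not go through. Note also that the theorem only asserts that the extremes of \emph{whatever} label set a subtree ends up with sit at its spine ends; proving that does not require order-independence of that set. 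The workable route --- the one this paper adapts for $\instar(k)$ --- is to show that the final $n-\ell$ fires of each level-$\ell$ vertex are forced into a grid-like poset (``endgame fires'') during which each vertex fires with exactly $\deg$ chips, and then to argue, as in Lemma \ref{lem:mixing} and the proof of Theorem \ref{thm:sorting}, that these forced fires monotonically funnel the minimum and maximum of each subtree's current chip set down the appropriate spines regardless of which chips those happen to be.
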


   This paper's format and terminology are inspired by Musiker and Nguyen's work; particularly in our use of the term ``endgame fires,'' which we define in Section \ref{subsec:main-theorem}.
   Recently, in September 2025, Inagaki and Lin \cite{inagaki2025labeledchipfiringundirectedkary} worked to generalize this concept to $k$-ary trees and were able to provide both upper and lower bounds on the number of unique stable configurations arising from the configuration beginning with $\frac{k^\ell-1}{k-1}$ (for $\ell\in\Z_{>0}$) chips on the origin. 

    
    In this paper we introduce a variation of this idea by playing the labeled chip-firing game on infinitely subdivided star graphs. We prove that each ``branch'' will be ordered when one starts with $km$ chips on the center vertex of a star graph with $k$ branches. This sorting condition is weaker than that shown in \cite{Hopkins_2017}, but allows for a richer analysis of the types of stable configurations that do occur. We also show that for the $m=2$ case, there is a bijection between stable configurations and $k\times 2$ standard Young tableaux, and that for $m>2$ a bijection exists between $k\times m$ SYTs and stable configurations with an additional ``volatility minimizing'' condition.

    \vspace{1em}
    
    \noindent \textbf{Acknowledgements:} The authors thank Gregg Musiker for his invaluable guidance and support as the advisor of this project. Dylan Snustad also thanks Vic Reiner and NSF Grant DMS-$2053288$ for partial support during Summer 2024.

	\justify
	\section{A Review of Unlabeled Chip-Firing}\label{sec:unlabeled-gen}
	
	\justify
	We now provide a review of classical chip-firing. Begin with an undirected graph $G=(V,E)$ with $|V(G)|=n$. We say a vector $C=(C(v_1),\ldots,C(v_n))\in \N^n$ is a \textit{chip configuration} (we use $\N=\Z_{\geq 0}$), and a vertex $v_i$ has $C(v_i)$ chips on it. We let $N$ denote the total number of chips in a configuration, so $N=\sum_{v\in V(G)}C(v)$. If $C(v)\geq\deg(v)$, then $v$ is \textit{ready to fire}. Vertices are fired one at a time. Firing vertex $v$ sends one of its chips to each of its neighboring vertices along an edge, resulting in a new chip configuration $C'$ with $C'(v)=C(v)-\deg(v)$, $C'(u)=C(u)+1$ if $e_{vu}\in E(G)$, and $C'(u)=C(u)$ otherwise. We say a chip configuration $C'$ is \textit{reachable from $C$} if there is a sequence of vertex firings beginning with $C$ and ending in $C'$. A chip configuration is \textit{stable} if no vertex is ready to fire. The following theorem is a well-known result detailing which configurations on a given graph $G$ have reachable stable configurations.

    \begin{theorem}[\cite{BJORNER1991283}]
        Let $N$ be the total number of chips, i.e. $N = \sum_{v\in V(G)}C(v)$.
        \begin{enumerate}
            \item[(a)] If $N>2|E(G)|-n$ then the game will never stabilize.
            
            \item[(b)] If $N<|E(G)|$ then the configuration reduces to a unique stable configuration.
            
            \item[(c)] If $|E(G)|\leq N\leq 2|E(G)|-n$ then there exists some initial chip configurations which leads to an infinite process, and some other initial configurations which terminates in finite time.
        \end{enumerate} 
    \end{theorem}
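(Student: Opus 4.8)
The plan is to handle the three regimes separately, in each case comparing the conserved total $N=\sum_{v}C(v)$ (firing preserves $N$) against the extremal chip counts of stable and of periodic configurations; I assume throughout that $G$ is connected, which is the setting of \cite{BJORNER1991283} and ensures that the graph Laplacian $L=D-A$ has kernel spanned by the all-ones vector $\mathbf{1}$, and I will use the handshake identity $\sum_{v}\deg(v)=2|E(G)|$ repeatedly. Part (a) is the easy direction: in any stable configuration every vertex carries at most $\deg(v)-1$ chips, so a stable configuration holds at most $\sum_{v}(\deg(v)-1)=2|E(G)|-n$ chips. Since $N$ is invariant under firing and $N>2|E(G)|-n$, no reachable configuration can be stable, so at every reachable configuration some vertex is ready to fire and the game continues forever.

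For part (b) I would prove the contrapositive: an infinite game forces $N\ge |E(G)|$. Because the total is fixed at $N$, each vertex carries between $0$ and $N$ chips, so the configuration space is finite and an infinite game must revisit a configuration, say $C_{t_1}=C_{t_2}$ with $t_1<t_2$. If $x_v\ge 0$ counts the firings of $v$ on $(t_1,t_2]$, the net change at each vertex vanishes, i.e. $Lx=0$, so $x=c\mathbf{1}$ for some integer $c\ge 1$: every vertex fires the same positive number of times over the period. I would then reduce to the case $c=1$, producing a legal schedule in which each vertex fires exactly once and returns the configuration to itself. Ordering the vertices $w_1,\dots,w_n$ by firing time, when $w_j$ fires it has received exactly one chip from each of its $b_j$ earlier neighbours and nothing else, so legality gives $C(w_j)\ge \deg(w_j)-b_j=f_j$, the number of neighbours of $w_j$ appearing later in the order. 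Summing over $j$ and noting that each edge is the ``forward'' edge of exactly one endpoint yields $N=\sum_j C(w_j)\ge\sum_j f_j=|E(G)|$. Combining this with the standard abelian (confluence) property of chip-firing, which guarantees that whenever the game halts the terminal configuration and the firing vector are independent of the firing order, then gives the full claim: for $N<|E(G)|$ the game halts, and it halts at a unique stable configuration.

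The main obstacle is exactly the reduction from the periodic firing vector $c\mathbf{1}$ to an honest $c=1$ round. The naive ``first-firing order'' need not be a legal once-each schedule, because during the true period an early vertex may fire several times before a later vertex fires at all, and these extra chips enter the legality inequalities with the wrong sign, weakening the bound to the useless $N\ge 2|E(G)|-\sum_{j}m_{j}$ with $\sum_j m_j\ge|E(G)|$. I expect to resolve this via an exchange/commutation argument on legal firing sequences, peeling off a legal sub-round realizing the firing vector $\mathbf{1}$ from one realizing $c\mathbf{1}$, or else by invoking the corresponding structural lemma of \cite{BJORNER1991283} directly.

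Finally, part (c) is handled by explicit constructions showing that for every $N$ in the middle range both outcomes occur; note the range is nonempty only when $|E(G)|\ge n$, i.e. when $G$ is not a tree. A terminating example is any stable configuration with $N$ chips, which exists for all $N\le 2|E(G)|-n$ and halts immediately. For an infinite example, fix an ordering $w_1,\dots,w_n$ of the vertices and set $C(w_j)=f_j$, the number of later neighbours, giving exactly $|E(G)|$ chips; firing $w_1,\dots,w_n$ in order is legal by construction and returns to $C$, so the game loops forever. Since adding chips can only enable more firings, padding this configuration with $N-|E(G)|\le |E(G)|-n$ extra chips placed anywhere produces an infinite game with any prescribed total $N$ in the range.
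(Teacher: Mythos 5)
First, a remark on the comparison itself: the paper does not prove this statement --- it is quoted from \cite{BJORNER1991283} as background --- so there is no in-paper argument to measure you against, and your proposal must be judged on its own. Parts (a) and (c) are correct and standard: (a) is the right count of the maximal stable total $\sum_v(\deg(v)-1)=2|E(G)|-n$ combined with conservation of $N$, and (c) correctly exhibits a stable configuration of any total $N\le 2|E(G)|-n$ for the terminating case and the configuration $C(w_j)=f_j$ (total $|E(G)|$) with its legal once-around cycle for the infinite case, padded using monotonicity of legal sequences under adding chips. The genuine gap is exactly the one you flag in part (b): the reduction from the periodic firing vector $c\mathbf{1}$ to a legal round in which every vertex fires exactly once. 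This is not a routine exchange/commutation exercise --- the assertion that an infinite game admits a recurrent configuration with a legal period of firing vector $\mathbf{1}$ is itself one of the harder theorems of \cite{BJORNER1991283} (eventual periodicity with period $n$), and without it your first-firing inequality $C(w_j)\ge\deg(w_j)-b_j$ really does fail, since an early vertex may deposit several chips on $w_j$ before $w_j$ first fires. As written, part (b) therefore rests on an unproved (though true) structural lemma.

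The gap can be closed without any periodicity reduction by running your count on \emph{last} firings rather than first firings, inside the very window you already constructed. In $(t_1,t_2]$ every vertex fires $c\ge 1$ times; let $\tau(v)$ denote the time of the last firing of $v$ in this window. After time $\tau(v)$ the vertex $v$ loses no chips up to time $t_2$, but it receives at least one chip from every neighbour $u$ with $\tau(u)>\tau(v)$, namely from $u$'s firing at time $\tau(u)$. Hence at time $t_2$ vertex $v$ holds at least $|\{u\sim v:\tau(u)>\tau(v)\}|$ chips, and summing over $v$ counts each edge exactly once, at its endpoint with the smaller $\tau$; so the total at time $t_2$ is at least $|E(G)|$, and conservation gives $N\ge|E(G)|$. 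The point is that a vertex which has just fired for the last time can only accumulate chips afterwards, so the over-counting problem that kills the first-firing version cannot arise. With this substitution the reduction to $c=1$ becomes unnecessary, and the rest of your argument --- including the appeal to the abelian property both for uniqueness of the stable configuration in (b) and for the finite/infinite dichotomy implicitly used in (c) --- goes through.
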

    We will be concerned with finite configurations on graphs with infinitely many vertices and edges,  so the process will always terminate.

    \pagebreak
    
    \justify
    \section[]{Unlabeled Chip-Firing on the Infinite $k$-Star}\label{sec:unlabeled-star}

    Let an \textit{infinite $k$-star graph}, ${\operatorname{instar}(k)}$, refer to an undirected graph consisting of $k$ \textit{branches}, each of which have infinitely many vertices connected in a line. All $k$ branches share one central vertex, the \textit{center} of the infinite $k$-star. Label the center by ${v_{i,0}}$ for all $i\in[k]:=\{1,2\ldots,k\}$. In other words, $v_{i,0}$ refers to the unique center vertex which is shared by all the branches, no matter which branch's index we use to refer to it. Label the vertices on the branches by ${v_{i,j}}$ for $(i,j)\in[k]\times\Z_{>0}$ where $j$ is the distance from the center to the vertex in question. See Figure \ref{fig:star-example} for an example. We call $j$ the \textit{level} of vertex $v_{i,j}$.
    
    \begin{figure}[ht]
        \centering
        \begin{tikzpicture}[scale=0.5]
            \node[circle,fill=black,scale=0.5,label={375:$v_{i,0}$}] at (360:0mm) (center) {};
            \node at (90:8cm) (n1) {branch 1};
            \node[circle,fill=black,scale=0.5,label={360:$v_{1,1}$}] at (90:2cm) (m1) {};
            \node[circle,fill=black,scale=0.5,label={360:$v_{1,2}$}] at (90:4cm) (l1) {};
            \node[circle,fill=black,scale=0.5,label={360:$v_{1,3}$}] at (90:6cm) (p1) {};
            \draw[->, thick] (center)--(n1);
            \node at (330:8cm) (n2) {branch 2};
            \node[circle,fill=black,scale=0.5,label={-120:$v_{2,1}$}] at (330:2cm) (m2) {};
            \node[circle,fill=black,scale=0.5,label={-120:$v_{2,2}$}] at (330:4cm) (l2) {};
            \node[circle,fill=black,scale=0.5,label={-120:$v_{2,3}$}] at (330:6cm) (p2) {};
            \draw[->, thick] (center)--(n2);
            \node at (210:8cm) (n3) {branch 3};
            \node[circle,fill=black,scale=0.5,label={90:$v_{3,1}$}] at (210:2cm) (m3) {};
            \node[circle,fill=black,scale=0.5,label={90:$v_{3,2}$}] at (210:4cm) (l3) {};
            \node[circle,fill=black,scale=0.5,label={90:$v_{3,3}$}] at (210:6cm) (p3) {};
            \draw[->, thick] (center)--(n3);
        \end{tikzpicture}
        \caption{$\instar(3)$ with vertices labeled.}
        \label{fig:star-example}
    \end{figure}
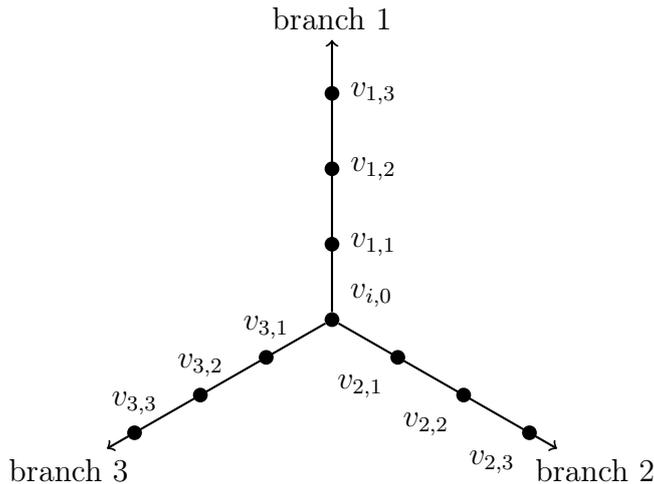Another fundamental result in chip-firing is called \textit{global confluence}:
    
    \begin{corollary}[Global confluence]\label{cor:global-confluence}
        \cite{klivansBook} Let $C$ be a configuration such that there is some finite sequence of legal firing moves to reach a stable configuration $C_s$. Then $C_s$ is the unique stable configuration reachable from $C$. We call this property global confluence.
        
        It follows from the proof of global confluence given in \cite{klivansBook} that although the order of legal firing moves is changeable via confluence, the stabilizing sequence is unique as a multi-set in the following sense:
        \begin{itemize}
            \item The length of any stabilizing sequence is the same, and
            
            \item The number of times each site fires in any stabilizing sequence is the same.
        \end{itemize}
    \end{corollary}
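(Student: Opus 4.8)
The plan is to reduce everything to two elementary lemmas about individual firings and then run an induction on the length of a stabilizing sequence. Throughout I work with \emph{legal firing sequences} from $C$, i.e.\ ordered lists $\sigma=(s_1,\dots,s_p)$ of vertices such that each $s_i$ is ready to fire in the configuration reached after firing $s_1,\dots,s_{i-1}$, and to each such $\sigma$ I attach its \emph{firing vector} $x_\sigma\in\N^{V}$, where $x_\sigma(v)$ records how many times $v$ occurs in $\sigma$. Because $C$ has finitely many chips, every firing sequence stays within a bounded region of the graph, so the infiniteness of $\instar(k)$ causes no difficulty. The first step is a commutation (``diamond'') lemma: if $u\neq v$ are both ready to fire in a configuration $D$, then $v$ is still ready to fire after firing $u$, and firing $u$ then $v$ produces the same configuration as firing $v$ then $u$. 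The point is that firing $u$ only adds chips to its neighbors and removes none from $v$, so it cannot disable $v$, while the net effect of firing two distinct vertices is order-independent because each firing acts additively on the chip counts.

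Second I would prove a persistence lemma: if $v$ is ready to fire in $C$ and $\sigma$ is a legal firing sequence from $C$ reaching a stable configuration $C_s$, then $v$ must appear in $\sigma$. Indeed, if $v$ never fired along $\sigma$, then the chip count at $v$ could only increase along the whole sequence (it receives chips from firing neighbors and loses none), so $v$ would still be ready to fire in $C_s$, contradicting stability.

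Finally I would prove the full statement by induction on $p$, the length of a fixed stabilizing sequence $\sigma=(s_1,\dots,s_p)$, comparing it to an arbitrary stabilizing sequence $\tau=(t_1,\dots,t_q)$ from $C$. If $p=0$ then $C$ is stable, so no vertex is ready and $\tau$ is empty as well. For the inductive step, let $v=s_1$; by the persistence lemma $v$ occurs in $\tau$, and I let $t_j$ be its first occurrence. Since none of $t_1,\dots,t_{j-1}$ equals $v$, repeated application of the commutation lemma slides the firing of $v$ to the front, producing a legal sequence $\tau'$ from $C$ that fires $v$ first, reaches the same stable configuration $D_s$ as $\tau$, and has the same firing vector. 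Both $\sigma$ and $\tau'$ now begin by firing $v$; deleting this common first step yields two stabilizing sequences from the configuration $C_1$ obtained after firing $v$, of lengths $p-1$ and $q-1$. The induction hypothesis forces $p-1=q-1$, equal firing vectors, and a common stable configuration; reinstating the initial firing of $v$ gives $p=q$, $x_\sigma=x_\tau$, and $C_s=D_s$, which is exactly global confluence together with the two bulleted multiset claims.

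I expect the main obstacle to be the bookkeeping in the reordering step: one must verify that sliding $v$ leftward past $t_1,\dots,t_{j-1}$ keeps every intermediate sequence legal and preserves the configuration reached after $j$ steps. This follows by iterating the commutation lemma, but it is the one place where care is genuinely needed. I would also remark that, abstractly, local confluence (the diamond lemma) together with termination yields confluence via Newman's Lemma; the advantage of the direct induction above is that it simultaneously delivers the invariance of both the length and the firing vector asserted in the two bullet points.
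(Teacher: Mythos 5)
Your proof is correct and complete: the commutation (diamond) lemma, the persistence lemma, and the induction on the length of a fixed stabilizing sequence together constitute the standard proof of the abelian property of chip-firing, and the equality of firing vectors immediately yields both bulleted claims. The paper gives no proof of this corollary --- it simply cites \cite{klivansBook} --- and your argument is essentially the one found in that reference, so there is nothing further to add.
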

    
    We will use confluence to prove Theorem \ref{thm:stable-shape}, which determines the final number of chips on each vertex after stabilizing from a special starting configuration.
    
    \begin{proposition}
        For a configuration with $N=km+r$ $(0\leq r<k)$ unlabeled chips sitting on the center of $\instar(k)$, all reachable stable configurations have:
        \[C(v_{i,j})=\begin{cases}
            r &\text{if }j=0 \\
            1 &\text{if }j\in[m] \\
            0 &\text{otherwise}
        \end{cases}\]
        so $r$ chips on the center, $1$ chip on each of the innermost $m$ vertices of each branch, and no chips elsewhere.
    \end{proposition}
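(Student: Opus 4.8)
The plan is to use global confluence (Corollary~\ref{cor:global-confluence}) to reduce the statement to a single reachability claim. Since we work with finitely many chips on an infinite graph, the process terminates, so a stable configuration is reached; by confluence it is unique and has a well-defined firing-count (odometer) vector $u$, where $u(v)$ records how many times $v$ fires. Thus it suffices to exhibit one legal firing sequence from the initial configuration to the configuration $C^{*}$ described in the statement, together with the immediate observation that $C^{*}$ is stable: its center carries $r\le k-1<k=\deg(v_{i,0})$ chips, each vertex $v_{i,j}$ with $j\in[m]$ carries $1<2=\deg(v_{i,j})$ chips, and all other vertices are empty, so no vertex is ready to fire. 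Once $C^{*}$ is shown to be reachable, confluence identifies it as the unique reachable stable configuration.

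To organize the firing, I would first record the target odometer. Writing $u_j:=u(v_{i,j})$, which is independent of the branch $i$ by the symmetry of $\instar(k)$ under permuting branches (a permutation automorphism sends a stabilizing sequence to another stabilizing sequence for the same symmetric initial configuration, so by uniqueness it preserves both the stable configuration and its odometer), the balance relation $C^{*}=C_0-\Delta u$, with $\Delta$ the graph Laplacian, becomes $-2u_j+u_{j-1}+u_{j+1}=C^{*}(v_{i,j})$ on each branch and $km+r-ku_0+ku_1=r$ at the center. Solving these with $C^{*}(v_{i,j})=1$ for $j\in[m]$ and $0$ beyond, under the requirement that $u$ be finitely supported, yields the closed form $u_j=\binom{m+1-j}{2}$ for $0\le j\le m$ and $u_j=0$ for $j>m$; in particular the center fires $\binom{m+1}{2}$ times. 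This is the bookkeeping that any correct firing sequence must realize, and it is the quantity I would track to confirm that a proposed sequence ends exactly at $C^{*}$.

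For reachability I would argue by induction on $m$ using the abelian property. Assuming the result for $k(m-1)+r$ chips on the center, I start from the corresponding stable configuration (one chip on levels $1,\dots,m-1$, and $r$ on the center) and add $k$ fresh chips to the center, which is legitimate because stabilizing $C_0$ agrees with stabilizing any partial sum and then continuing from the sum. Continuing to fire then sends a single ``wave'' outward: the center topples, pushing a surplus chip onto level $1$; this creates a vertex with two chips that topples, advancing the surplus to level $2$ while the center refills and fires again, and so on until the surplus settles as the new chip on level $m$ and every inner level is restored to exactly one chip. Verifying that this wave is legal at each step and halts precisely at $C^{*}$, rather than overshooting and pushing a chip past level $m$, is the crux of the argument, and it is exactly here that the odometer computation is used: the surplus advances one level per round, for $m$ rounds, and the cumulative firing counts must match $\binom{m+1-j}{2}$.

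The main obstacle I anticipate is this last verification of reachability, and the subtlety is genuine: stability together with the balance equations and branch symmetry alone do \emph{not} determine $C^{*}$. Indeed, for each $J\ge m$ the configuration placing one chip on levels $\{J-m+1,\dots,J\}$ of every branch, with $r$ on the center, is also stable and satisfies the same symmetric balance relations with a finitely supported, convex, nonnegative odometer (one checks that the $m$ upward second differences force $u_0-u_1=m$, so the center balance again gives $r$). What rules these spurious configurations out is precisely that they are not reachable by legal firings from the initial configuration; equivalently, the actual stabilization realizes the pointwise-\emph{minimal} odometer among all nonnegative integer solutions, and $u_j=\binom{m+1-j}{2}$ is that minimum. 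Making the wave argument airtight (or, alternatively, invoking a least-action/minimality principle) is therefore the heart of the proof, while stability and uniqueness are routine.
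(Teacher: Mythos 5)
Your proposal follows essentially the same route the paper takes for this result (written out in the proof of Theorem 3.2 for the $r=0$ case and left implicit for general $r$): invoke global confluence to reduce everything to exhibiting one stabilizing sequence, build that sequence by induction on $m$ using the abelian property, and push the $k$ new chips outward in repeated center-to-boundary waves whose cumulative firing counts are exactly your $u_j=\binom{m+1-j}{2}=\frac{(m-j)(m-j+1)}{2}$. Your supplementary Laplacian derivation of the odometer and your observation that the balance equations alone also admit translated stable configurations are both correct and simply confirm why the paper, like you, rests the argument on an explicit reachability construction rather than on stability plus symmetry.
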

    
    In particular, we are interested in the special case where $r=0$. Let $[\Delta^{k,m}]$ denote the configuration where $N=km$ unlabeled chips all sit on the center of $\instar(k)$.
    
    \begin{theorem}\label{thm:stable-shape}
        If we start from $[\Delta^{k,m}]$, the unique reachable stable configuration is
        \[C(v_{i,j})=\begin{cases}
            1 &\text{if }j\in[m] \\
            0 &\text{otherwise}
        \end{cases}\]
        so the center is empty, the next $m$ vertices of every branch are filled, and the rest of the vertices are empty.
    
        Additionally, every stabilization sequence starting from $[\Delta^{k,m}]$ takes 
        \[\frac{m(m+1)}{2}+k\cdot\frac{(m-1)m(m+1)}{6}\]
        fires, with each vertex $v_{i,j}$ firing $\frac{(m-j)(m-j+1)}{2}$ times for $j\in[0,m-1]$ and $0$ times for all other vertices.
    \end{theorem}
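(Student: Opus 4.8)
The plan is to split the statement into its two parts---the shape of the terminal configuration and the enumeration of fires---and to handle the second part by a conservation (odometer) argument. The shape is already delivered by the Proposition stated above in the case $r=0$: it asserts that every reachable stable configuration obtained from $km$ chips on the center places exactly one chip on each of levels $1$ through $m$ of every branch and none elsewhere. Since $[\Delta^{k,m}]$ is a finite configuration on an infinite graph, the process always terminates, so a stable configuration is indeed reached; because all reachable stable configurations coincide with this $C^{*}$ (and by global confluence, Corollary \ref{cor:global-confluence}), it is the unique one. This settles the first displayed formula, so the remaining work is purely to count fires.

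For the fire counts I would let $u(v)$ denote the number of times $v$ fires during stabilization. By global confluence this number is independent of the firing order, so $u$ is a genuine function of the initial configuration. I would then exploit symmetry: every permutation of the $k$ branches is an automorphism of $\instar(k)$ that fixes $[\Delta^{k,m}]$ and commutes with the firing rule, so by order-independence $u$ is invariant under these automorphisms. As this group acts transitively on $\{v_{1,j},\dots,v_{k,j}\}$ for each fixed level $j$, the odometer depends only on the level, and I may write $u(v_{i,j})=f_{j}$.

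The key identity is local chip conservation: tracking inflow and outflow at each vertex gives $C^{*}(v)=C_{\mathrm{init}}(v)-\deg(v)\,u(v)+\sum_{w\sim v}u(w)$. Writing this at the center (degree $k$, with its $k$ level-$1$ neighbors) yields $f_{0}=m+f_{1}$, and at a branch vertex of level $j\ge 1$ (degree $2$) it yields the discrete Poisson recurrence $f_{j-1}+f_{j+1}-2f_{j}=C^{*}(v_{i,j})$, which equals $1$ for $1\le j\le m$ and $0$ for $j>m$. Because only finitely many vertices fire, $f_{j}=0$ for all large $j$; feeding this into the homogeneous tail ($j>m$) forces the eventually-affine sequence to vanish, so in fact $f_{j}=0$ for every $j\ge m$. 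I would then solve the recurrence by telescoping the first differences $d_{j}:=f_{j}-f_{j+1}$, which satisfy $d_{j-1}-d_{j}=C^{*}(v_{i,j})$; summing from the outside in gives $d_{j}=m-j$ for $0\le j\le m-1$, and a second summation gives $f_{j}=\sum_{t=1}^{m-j}t=\tfrac{(m-j)(m-j+1)}{2}$, matching the claim (and $f_{0}=\tfrac{m(m+1)}{2}$, consistent with $f_{0}=m+f_{1}$).

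Finally, to obtain the total I would add $F=f_{0}+k\sum_{j=1}^{m-1}f_{j}=\tfrac{m(m+1)}{2}+k\sum_{t=1}^{m-1}\tfrac{t(t+1)}{2}$ and evaluate the inner sum by the hockey-stick identity $\sum_{t=1}^{m-1}\binom{t+1}{2}=\binom{m+1}{3}=\tfrac{(m-1)m(m+1)}{6}$, producing the stated total. None of these steps is genuinely hard once the Proposition is in hand; the two points that require care, rather than calculation, are the structural reductions---invoking confluence to make $u$ well-defined and symmetric, and using termination to kill the affine tail of the recurrence so that $f_{j}=0$ for $j\ge m$ rather than a nonzero linear function. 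I expect this finiteness justification to be the most delicate part of the writeup, since everything downstream is a routine summation.
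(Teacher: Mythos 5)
Your counting argument is correct, and it is genuinely different from the paper's. The paper proves the theorem constructively: it invokes global confluence to reduce to exhibiting a single good stabilization sequence, then inducts on $m$ (first for $k=1$, then observing the same pattern works for general $k$), explicitly firing the center and then the levels outward and tallying that each $v_{i,j}$ picks up $m_0-j+1$ additional fires at each inductive step. You instead run an odometer/conservation argument: confluence makes the fire-count function $u$ well defined, branch-permutation automorphisms force $u(v_{i,j})=f_j$, and the local balance identity $C^{*}(v)=C_{\mathrm{init}}(v)-\deg(v)u(v)+\sum_{w\sim v}u(w)$ turns the known stable shape into a discrete Poisson equation whose finitely supported solution is $f_j=\binom{m-j+1}{2}$. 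Your route is cleaner for the enumeration and generalizes immediately (e.g.\ to the $r\neq 0$ case of the preceding Proposition, or to other symmetric initial data), whereas the paper's route produces an explicit firing schedule that is reused later in the paper, and it establishes the shape and the counts simultaneously.

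The one soft spot is that you take the shape of the terminal configuration from the Proposition preceding the theorem. That Proposition is stated in the paper without proof, and the theorem's constructive induction is in effect what establishes its $r=0$ case; so as written your argument leaves the first half of the theorem resting on an unproved statement. This is fixable within your own framework rather than by falling back on the paper's induction: stability plus branch symmetry of $C^{*}$ gives $C^{*}(v_{i,0})\equiv 0\pmod k$ from the center balance equation, hence $C^{*}(v_{i,0})=0$; then $d_{j-1}-d_j=C^{*}(v_{i,j})\ge 0$ with $d_j\to 0$ shows the $f_j$ are nonincreasing and that exactly $m$ levels of each branch carry a chip, and a short support argument (a vertex at level $j$ can only ever receive its first chip from level $j-1$) rules out gaps, pinning the chips to levels $1$ through $m$. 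You should either include that derivation or prove the Proposition; otherwise the fire-count half of your writeup is complete and correct as it stands.
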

    
    \begin{proof}
        Note that due to the global confluence property of unlabeled chip-firing given in Corollary \ref{cor:global-confluence}, it suffices to show that there exists one stabilization sequence with the desired properties.
    
        Consider the case where $k=1$. We will induct on $m$. As a base case, let $m=1$. We begin from $[\Delta^{1,1}]$ where $C(v_{1,0})=1$ and $C(v_{1,j})=0$ for $j\neq 0$. The center $v_{1,0}$ is ready to fire and fires once, sending its one chip to $v_{1,1}$. This gives $C(v_{1,1})=1$ and $C(v_{1,j})=0$ for $j\neq 1$ as desired. Additionally, stabilization took 1 fire total, with $\frac{(1-0)(1-0+1)}{2}=1$ fire at $v_{1,0}$ as desired.
    
        Now induct on $m$. Suppose that for all $m\leq m_0$, $[\Delta^{k,m}]$ has the desired stable configuration and each vertex fires $\frac{(m-j)(m-j+1)}{2}$ times. We will show that stabilization sequences of $[\Delta^{1,m_0+1}]$ have the desired properties as well, namely that the stable configuration will have $C(v_{1,j})=1$ for $j\in[m_0+1]$ and $C(v_{1,j})=0$ for all other vertices and that every vertex $v_{i,j}$ for $j\in[0,m_0]$ fires $\frac{(m_0-j+1)(m_0-j+2)}{2}$ times.
    
        Starting from $[\Delta^{1,m_0+1}]$, fire the same vertices in the same order as they would be fired in a stabilization sequence of $[\Delta^{1,m_0}]$. This leaves us with a configuration $\hat{C}$ where $\hat{C}(v_{i,j})=1$ for all $j\in[0,m_0]$. The only firable vertex is $v_{1,0}$. Fire $v_{1,0}$ to leave no chips on the center, two on $v_{1,1}$, and the remaining vertices unchanged. Now fire $v_{1,1}$ to leave one chip on $v_{1,0}$, none on $v_{1,1}$, and two on $v_{1,2}$. Continue firing the firable vertices from the center outwards until there is one chip on each of the vertices $\{v_{1,j}\}_{j=0}^{m_0-1}$ and $v_{1,m_0+1}$ and no chips on $v_{i,m_0}$. Then fire the vertices from the center outwards again until all vertices $\{v_{1,j}\}_{j=0}^{m_0+1}$ have one chip except $v_{i,m_0-1}$, which has none. Continue in this fashion until stabilized. This will result in the stable configuration $C$ with $C(v_{1,j})=1$ for $j\in[m_0+1]$ and $C(v_{1,j})=0$ for all other vertices, as desired. Additionally, during this process, each vertex in $\{v_{1,j}\}_{j-0}^{m_0}$ fires another $m_0-j+1$ times, bringing the total number of fires for a chip $v_{1,j}$ ($j\in[0,m_0]$) to
        \[\frac{(m_0-j)(m_0-j+1)}{2}+m_0-j+1= \frac{(m_0-j+1)(m_0-j+2)}{2}.\]
        Thus for all $m\geq 1$, $[\Delta^{1,m}]$ stabilizes to a configuration $C$ such that $C(v_{i,j})=1$ for $i\in[1]=\{1\}$, $j\in[m]$ and $C(v_{i,j})=0$ for $i=1$, $j\not\in[m]$, and additionally each vertex $v_{i,j}$ fires $\frac{(m-j)(m-j+1)}{2}$ times for $i=1$, $j\in[0,m-1]$ as desired. Finally, this means that the total number of fires in a stabilization sequence is the sum of the first $m$ triangular numbers which is given by the tetrahedral numbers:
        \[\sum_{j=0}^{m-1} \frac{(m-j)(m-j+1)}{2} = \frac{m(m+1)(m+2)}{6}.\]
    
        Suppose now that $k>1$. All of the same logic holds, except that $v_{i,0}$ must have $k$ chips to fire. So long as the firings are done in the manner described above, first firing the center, then firing the center followed by the level 1 vertices, then firing the center followed by the level 1 vertices followed by the level 2 vertices, etc., all of the logic is the same. The only change is that for $m>1$ the total number of fires becomes
        \[\frac{m(m+1)}{2} + \sum_{j=1}^{m-1} k\cdot\frac{(m-j)(m-j+1)}{2} = \frac{m(m+1)}{2}+k\cdot\frac{(m-1)m(m+1)}{6}\]
        (of which the formula given above for $k=1$ is a special case).
    \end{proof}
	
    \justify
    \section[Labeled Chip-Firing on the Infinite k-Star]{Labeled Chip-Firing on the Infinite $k$-Star}\label{sec:labeled-star}
    \subsection[Labeled Chip-Firing on the Infinite k-Star]{Labeled Chip-Firing on the Infinite $k$-Star}\label{subsec:labeled-star}
    
    Labeled chip-firing was introduced in \cite{Hopkins_2017} when the underlying graph is $\Z$. We adapt this process for the context of $\instar(k)$: Cyclically order the branches $1$ through $k$. Each chip is labeled with a distinct number in $[N]$, and we will denote a chip with label $c$ by $(c)$.  When firing a vertex $v_{i,j}$, we choose $\deg(v_{i,j})$ chips on the vertex and take note of their labels. We have the following two cases:
    \begin{itemize}
        \item When firing $v_{i,0}$, we choose $k$ chips at the center with labels $n_1<\cdots < n_k$. Chip $(n_i)$ is sent to $v_{i,1}$.
    
        \item When firing $v_{i,j}$ for $j\neq 0$, we choose $2$ chips on the vertex with labels $a<b$. Chip $(a)$ is sent to $v_{i,j-1}$ and chip $(b)$ is sent to $v_{i,j+1}$.
    \end{itemize}

    \justify
    \subsection{Main Theorem}\label{subsec:main-theorem}
    In this section, we consider the case of labeled chip-firing starting with chips $1$ through $km=N$ on the center vertex of $\instar(k)$. Denote this starting configuration ${\Delta^{k,m}}$ (so $[\Delta^{k,m}]$ is its underlying unlabeled configuration). We use $\mathcal{C}$ to denote a labeled chip configuration and $\mathcal{C}(v)$ to be the set of labeled chips on a vertex $v$. Thus if $\mathcal{C}$ has underlying unlabeled configuration $C$, we have that $|\mathcal{C}(v)|=C(v)$.

    Our main theorem is as follows:
    
    \begin{theorem}\label{thm:sorting}
        All reachable stable configurations starting from $\Delta^{k,m}$ are sorted in the following way: the chips on the branches are sorted in increasing order reading from the center outwards. In other words, for all $i\in[k]$ and all $j\in[m-1]$, the chip ending on $v_{i,j}$ is less than the chip ending on $v_{i,j+1}$.
    \end{theorem}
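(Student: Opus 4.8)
The plan is to reduce the statement to a single branch and then certify sortedness by a family of monovariants that measure how far out the large chips have travelled. Fix a branch $i$. From this branch's point of view the center $v_{i,0}$ acts as a source that injects one chip at level $1$ each time it fires, and as a sink that absorbs a chip from level $1$ each time $v_{i,1}$ fires; on the interior levels the firing rule is exactly ``smaller chip inward, larger chip outward.'' The goal is to show that the terminal configuration on branch $i$ has no \emph{inversion}, i.e.\ no pair of levels $j<j'$ whose chips are in decreasing order.

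For a threshold $s\in\{0,1,\dots,N\}$ let $\Phi_{i,s}(\mathcal{C})=\sum \operatorname{lev}(c)$, the sum taken over all chips $c$ on branch $i$ with label $>s$, where $\operatorname{lev}(c)$ is the level of $c$. I would first check that each $\Phi_{i,s}$ is nondecreasing under every legal firing. At an interior vertex $v_{i,j}$ with $j\ge 2$ the larger chip moves out and the smaller moves in, so the total level of the ``big'' chips (label $>s$) cannot drop: the only way the inward chip is big is if the outward chip is big too, and the two changes cancel. The level-$1$ firing is the delicate case, since it removes a chip from branch $i$ to the center, but the removed chip is always the \emph{smaller} of the pair, so a loss can occur only when its larger partner is also big and is simultaneously pushed from level $1$ to level $2$. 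Finally, a firing of the center only adds a chip at level $1$. In every case $\Phi_{i,s}$ weakly increases. Since, for a fixed multiset of chips on a branch, the arrangement maximizing $\Phi_{i,s}$ for all $s$ simultaneously is exactly the sorted one (by the rearrangement inequality), the theorem is equivalent to the statement that each $\Phi_{i,s}$ attains its maximal possible terminal value.

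The hard part is this attainment: monotonicity alone does not force $\Phi_{i,s}$ to its maximum, and indeed the terminal chip multiset on a branch is \emph{not} determined, since different firing sequences distribute different chips to branch $i$. To close the gap I would use the exact firing counts supplied by Theorem~\ref{thm:stable-shape}: by global confluence (Corollary~\ref{cor:global-confluence}) the underlying unlabeled process fires each $v_{i,j}$ a fixed number of times, namely $\tfrac{(m-j)(m-j+1)}{2}$, so the number of chips crossing each edge of the branch outward and inward is predetermined. Reasoning from the outside in---$v_{i,m}$ never fires and $v_{i,m-1}$ fires exactly once---I would argue by an anti-trapping/counting argument that a large chip cannot be permanently held inside a smaller one: every chip sent across a cut is the larger of its pair, and the fixed quota of firings at the inner endpoint of a cut cannot be met while perpetually avoiding the largest chip present, forcing that chip outward to the outermost free level. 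Peeling off the outermost level and its (now largest) chip reduces branch $i$ to an instance with $m-1$ levels, and induction yields full sortedness.

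I expect the main obstacle to be precisely this anti-trapping step together with the coupling through the center. The firing rule allows a vertex holding three or more chips to fire a non-extremal pair, which can temporarily create an inversion, so no purely local monotonicity argument can succeed; one must invoke the global firing quotas to show every such inversion is eventually undone. The center coupling adds a second difficulty: a large chip may pass through branch $i$, return to the center, and be redistributed to another branch, so the bookkeeping of which chips are \emph{permanently} on branch $i$ must be handled with care. The monovariants $\Phi_{i,s}$ are designed exactly so that such transient excursions never decrease them, and making the outside-in peeling compatible with this redistribution is the step on which I would spend the most effort.
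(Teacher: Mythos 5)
Your monovariant $\Phi_{i,s}$ is indeed weakly increasing under every legal firing (the case analysis you sketch checks out, since the inward-moving chip is always the smaller of the fired pair), and you are right that sortedness of a branch is equivalent to every $\Phi_{i,s}$ being maximal \emph{for the terminal multiset of chips on that branch}. But that is exactly where the proposal stops being a proof. The terminal multiset on branch $i$ is not determined (for $(k,m)=(2,2)$ branch $1$ may end with $\{1,3\}$ or $\{1,2\}$, per Figure \ref{fig:probabilities}), so ``attains its maximal possible terminal value'' is a moving target, and a weakly increasing quantity starting at $0$ has no reason to reach a maximum. The ``anti-trapping'' cut-counting step you propose to close this gap is not carried out, and I do not believe it can be closed branch-locally: after the unique fire of $v_{i,m-1}$ settles the outermost chip, the center continues to fire $m-1$ more times into branch $i$, and nothing in your argument prevents one of those later arrivals from being \emph{larger} than the chip already parked at $v_{i,m}$, which would wreck the peeling induction. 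Controlling those later arrivals requires a global statement about how chips from different branches interleave at the center.

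That global statement is precisely what the paper supplies and your proposal lacks. The paper first pins down the combinatorial skeleton of the end of every stabilization sequence (Lemma \ref{lem:poset}, the ``book poset'' of endgame fires, adapted from Klivans--Liscio), and then proves Lemma \ref{lem:mixing}: successive endgame fires of the center send weakly decreasing chips to each fixed branch. With arrivals guaranteed to come in decreasing order, largest first, the interleaved endgame fires push the $f$-th arrival to level $m-f$ and sortedness follows by a short positional induction. Your write-up honestly flags ``the coupling through the center'' as the main difficulty, but flagging it is not resolving it; as it stands the proposal proves only that no single interior firing decreases $\Phi_{i,s}$, which is consistent with unsorted terminal branches. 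To rescue your approach you would still need an analogue of Lemma \ref{lem:mixing}, at which point the monovariant becomes largely redundant.
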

    
    \noindent The proof of Theorem \ref{thm:sorting} will follow after some crucial lemmas. In this section, we will:
    \begin{enumerate}
        \item Prove that there are special fires, which we call \textit{endgame fires}, that occur near the end of all stabilization sequences starting from $[\Delta^{k,m}]$. A partial ordering can be imposed on these endgame fires according to which fires must always occur before other fires in a stabilization sequence. (Lemma \ref{lem:poset})
        
        \item Prove that for every endgame fire at the center, the chip sent to branch $i$ is less than or equal to the chip that was sent to branch $i$ by the previous endgame fire at the center. (Lemma \ref{lem:mixing})
        
        \item Use Lemma \ref{lem:mixing} together to prove Theorem \ref{thm:sorting}.
    \end{enumerate}
    
    We begin by adapting the poset of vertex fires developed by Klivans and Liscio in \cite{klivans2020confluencelabeledchipfiring}. Starting from a fixed configuration, in our case $[\Delta^{k,m}]$ in particular, let $v_{i,j}^f$ denote the $f^{th}$-last fire of vertex $v_{i,j}$ in a stabilization sequence (with $f=0$ being the last fire of a vertex, $f=1$ being the next to last, etc.) Note that this is different than the convention established in \cite{klivans2020confluencelabeledchipfiring}, where $f=1$ would correspond to the final firing of a site. Say $v_{i,j}^f>v_{i',j'}^{f'}$ if $v_{i,j}^f$ must occur before $v_{i',j'}^{f'}$ in any stabilization sequence. This gives a poset structure on the fires. We focus in particular on a specific set of fires that happen at the end of each stabilization sequence, which we refer to as \textit{endgame fires} (in the style of Musiker and Nguyen's ``endgame moves'' \cite{musiker2023labeledchipfiringbinarytrees}). For any vertex $v_{i,j}$ that fires, its endgame fires are its final $m-j$ fires; that is, the endgame fires for $[\Delta^{k,m}]$ are $\{v_{i,j}^f:i\in[k],j\in[0,m-1],f\in[0,m-j-1]\}$.
    
    \begin{lemma}\label{lem:poset}
        The poset of endgame fires of the stabilization sequences for $[\Delta^{k,m}]$ has a particular grid-like structure. Let $v_{i,j}^f$ be an endgame fire. Then the following conditions must hold:
        
        \begin{enumerate}
            \item[(1a)] When $j\neq 0$, $v_{i,j}^f \leq v_{i,j-1}^{f+1}$ and $v_{i,j}^f \leq v_{i,j+1}^f$ (if $v_{i,j-1}^{f+1}$ and $v_{i,j+1}^f$ are endgame moves, respectively).
            
            \item[(1b)] $v_{i,0}^f \leq v_{i,1}^f$ for all $i\in[k]$ (if $f<m-1$).
            
            \item[(2)] When the firing move $v_{i,j}^f$ occurs, there are exactly $\deg(v_{i,j})$ chips present at the site that is firing.
        \end{enumerate}
    
        This results in a Hasse diagram made of $k$ ``half-diamond grids'', one for each branch of the $\instar(k)$ graph, like the example shown in Figure \ref{fig:poset}. We call this a \textup{book poset}, and each half-diamond grid a \textup{page} of the book. These half-diamond grids are disjoint except for at the elements $\{v_{i,0}^f\}_{f=0}^{m-1}$, which are shared among all half-diamond grids of the poset.
    \end{lemma}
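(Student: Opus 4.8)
The plan is to reduce everything to the unlabeled dynamics, since conditions (1a), (1b), (2) concern only the timing and multiplicity of fires, not the labels. First I would record from Theorem \ref{thm:stable-shape} and Corollary \ref{cor:global-confluence} that in every stabilization sequence each $v_{i,j}$ fires exactly $N_{i,j}=\frac{(m-j)(m-j+1)}{2}$ times (the center firing $\frac{m(m+1)}{2}$ times), so that the $m-j$ endgame fires of $v_{i,j}$ are precisely its fires numbered $\frac{(m-j-1)(m-j)}{2}+1$ through $N_{i,j}$. Writing $u_{i,j}(\tau)$ for the number of times $v_{i,j}$ has fired by time $\tau$, the chip counts are $\mathcal{C}(v_{i,j})=u_{i,j-1}+u_{i,j+1}-2u_{i,j}$ for $j\ge 1$, and $\mathcal{C}(v_{i,0})=km-k\,u_{i,0}+\sum_{i'}u_{i',1}$ at the center. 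Since a vertex never fires with fewer than $\deg$ chips, we have $u_{i,j-1}+u_{i,j+1}\ge 2u_{i,j}$ for $j\ge 1$ at all times, i.e.\ the odometer $\ell\mapsto u_{i,\ell}(\tau)$ is nonincreasing and discretely convex along each branch. This convexity is the structural backbone of the argument.

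Next I would pin down the start of the endgame. Using the fire counts together with convexity, I would show that in any stabilization sequence all non-endgame fires strictly precede all endgame fires, and that at the instant of the first endgame fire the configuration is exactly the state $\Phi$ in which the center holds $k$ chips and every $v_{i,j}$ with $j\in[m-1]$ holds a single chip. Concretely, just before a vertex fires for the $\left(\frac{(m-j-1)(m-j)}{2}+1\right)$-th time no endgame fire has yet occurred, so $u_{i,j}\le\frac{(m-j-1)(m-j)}{2}$ everywhere; feeding these bounds into the convexity relations forces the chip counts of $\Phi$. This makes the endgame a contiguous block issuing from a single known configuration, which is what enables an induction.

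The heart of the proof, and the step I expect to be hardest, is an induction over the endgame fires maintaining a precise ``staircase'' description of the configuration, from which (1a), (1b), and (2) are read off. From $\Phi$ the only firable vertex is the center, which then has exactly $k$ chips; firing it leaves two chips on each $v_{i,1}$, after which each branch propagates a single ``doubled'' site outward. I would prove by induction on the endgame step that at every moment each branch vertex holds at most $2$ chips and the center at most $k$, so each fire occurs at a site with exactly $\deg$ chips, giving (2); that a fire of $v_{i,j}$ can occur only after its inner neighbor has handed the wave outward and after the preceding outer fire, giving the two inequalities of (1a); and that the center can reach $k$ chips for its $f$-th-from-last fire only once each branch has returned a chip through $v_{i,1}^{f}$, giving (1b).

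The subtle point, and the real obstacle, is that a naive chip count does not suffice for (1b) when $k>1$, because different sequences may advance the branches unequally. Just before the center's $f$-th-from-last fire the balance gives only $\sum_{i'}u_{i',1}\ge k\left(\frac{m(m-1)}{2}-f\right)$, which, since each $u_{i',1}\le\frac{m(m-1)}{2}$, does not by itself force the individual bound $u_{i,1}\ge\frac{m(m-1)}{2}-f$. The resolution is structural rather than numerical: after the center fires, a branch returns a chip only when $v_{i,1}$ fires, and $v_{i,1}$ cannot fire a second time until the center refills it, which the center cannot do until it has again collected one returned chip from \emph{every} branch. Hence no branch can run ahead through the shared center, and each center endgame fire is gated on exactly one fresh return from each branch, which is precisely relation (1b). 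Once (1a) and (1b) are in hand, the Hasse diagram assembles as claimed: the relations (1a) make each branch a half-diamond grid, (1b) attaches the level-one fires of every branch to the common chain of center fires $\{v_{i,0}^{f}\}_{f=0}^{m-1}$, and since these center fires are shared by all pages, the poset is a book.
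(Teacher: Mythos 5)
Your reduction to the unlabeled dynamics, the fire counts from Theorem \ref{thm:stable-shape}, and the convexity inequality $u_{i,j-1}+u_{i,j+1}\ge 2u_{i,j}$ are all sound, but the claim on which your entire induction rests is false: it is not true that in every stabilization sequence all non-endgame fires precede all endgame fires, nor that the first endgame fire issues from the fixed configuration $\Phi$. Take $(k,m)=(1,4)$, write configurations as $\left(C(v_{1,0}),C(v_{1,1}),C(v_{1,2}),C(v_{1,3})\right)$, and fire vertices in the order $v_{1,0},v_{1,0},v_{1,1},v_{1,0},v_{1,0},v_{1,1},v_{1,0},v_{1,0},v_{1,1},v_{1,0}$. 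Every step is legal, and after these ten fires the configuration is $(0,1,3,0)$. The center has now fired $7$ of its $\frac{m(m+1)}{2}=10$ times, so the tenth step is the endgame fire $v_{1,0}^{3}$; yet $v_{1,2}$ has not fired at all, even though its first fire (it fires $3$ times, only the last $2$ being endgame fires) is a non-endgame fire. Thus an endgame fire occurs strictly before a non-endgame fire, the configuration at the onset of the endgame is $(1,0,3,0)$ rather than your $\Phi=(1,1,1,1)$, and during the endgame $v_{1,2}$ holds $3>2$ chips, breaking your staircase invariant. The specific error is the assertion that ``just before a vertex fires for the $\left(\frac{(m-j-1)(m-j)}{2}+1\right)$-th time no endgame fire has yet occurred'': conservation and convexity give lower bounds on how far the \emph{inner} vertices must have progressed before a given vertex can fire, but they place no lower bound on the progress of vertices farther from the center, which can lag arbitrarily while inner vertices race ahead into their endgame fires.

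The lemma itself survives this because condition (2) constrains the chip count only at the moments of the endgame fires themselves, not throughout an ``endgame phase,'' and because (1a)--(1b) are relative orderings among endgame fires only. This is why the paper (following Klivans and Liscio) argues backwards from the final fire: the last center fire must be preceded by the last fire of each $v_{i,1}$ and must occur with exactly $k$ chips, and one then inducts up the rows of the poset, using the total fire counts and the chip counts only at the specific fires already placed. To repair your argument you would need to abandon the global phase separation and prove your invariants only along the sub-poset of endgame fires, which essentially reproduces that backward induction.
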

    
    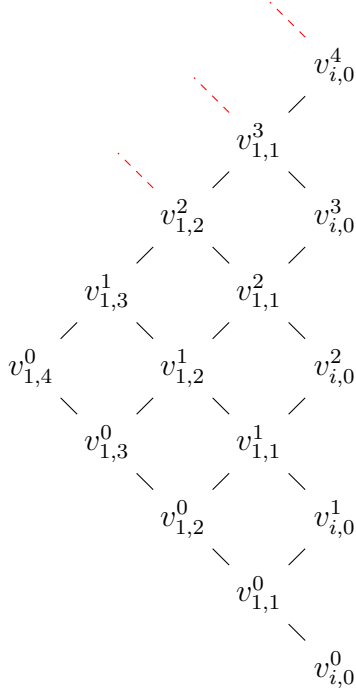
\begin{figure}
    \begin{center}
        \begin{tikzpicture}
    	      \node (0) at (0, 4) {$v_{i,0}^4$};
    	      \node[red] (19) at (-1, 5) {};
    		\node (1) at (0, 2) {$v_{i,0}^3$};
    		\node (3) at (0, 0) {$v_{i,0}^2$};
    		\node (5) at (0, -2) {$v_{i,0}^1$};
    		\node (6) at (0, -4) {$v_{i,0}^0$};
    		\node (7) at (-1, -3) {$v_{1,1}^0$};
    		\node (8) at (-1, -1) {$v_{1,1}^1$};
    		\node (9) at (-1, 1) {$v_{1,1}^2$};
    		\node (10) at (-1, 3) {$v_{1,1}^3$};
    		\node[red] (17) at (-2, 4) {};
    		\node (11) at (-2, 2) {$v_{1,2}^2$};
    		\node[red] (18) at (-3, 3) {};
    		\node (12) at (-2, 0) {$v_{1,2}^1$};
    		\node (13) at (-2, -2) {$v_{1,2}^0$};
    		\node (14) at (-3, -1) {$v_{1,3}^0$};
    		\node (15) at (-3, 1) {$v_{1,3}^1$};
    		\node (16) at (-4, 0) {$v_{1,4}^0$};
            \draw (0) to (10);
            \draw (10) to (11);
            \draw (11) to (15);
            \draw (15) to (16);
            \draw (10) to (1);
            \draw (11) to (9);
            \draw (15) to (12);
            \draw (16) to (14);
            \draw (1) to (9);
            \draw (9) to (12);
            \draw (12) to (14);
            \draw (9) to (3);
            \draw (12) to (8);
            \draw (14) to (13);
            \draw (3) to (8);
            \draw (8) to (13);
            \draw (8) to (5);
            \draw (13) to (7);
            \draw (5) to (7);
            \draw (7) to (6);
            \draw[red,dashed] (10) to (17);
            \draw[red,dashed] (11) to (18);
            \draw[red,dashed] (0) to (19);
        \end{tikzpicture}
        \caption{\justifying\label{fig:poset} An example of one page corresponding to the first branch of the book poset of $[\Delta^{k,5}]$ for some $k$. The red dotted lines indicate relations between elements of the endgame fires of the book poset and non-endgame fires, which we do not discuss in detail in our paper.}
    \end{center}
    \end{figure}
    
    The proof that our chip-firing set-up has this poset structure follows immediately from the proof of Lemma 2.6 in \cite{klivans2020confluencelabeledchipfiring}. Indeed, one can follow the proof of this lemma to show that each branch has this grid structure, and can then combine the ``pages'' together to get our full poset.
    
    \begin{proof}
        We note that following Klivans and Liscio's proof, we can use the same logic and Theorem \ref{thm:stable-shape} to show that the last fire from the center vertex has to occur after the last fires of the level 1 vertices on each of the branches connected to it; further, that it must fire with exactly $k$ chips (paragraph 1 in the proof of Lemma 2.6 in \cite{klivans2020confluencelabeledchipfiring}). This gives us the bottom ``tip'' of our poset.
        
        Now we may induct on the ``edges'' of our book poset. That is, if fire $v_{i,j}^0$ on any branch $i$ must take place before $v_{i,j-1}^0$, then we want to show that $v_{i,j+1}^0$ and $v_{i,j-1}^1$ must take place before $v_{i,j}^0$. This can be seen applying the same logic as Klivans and Liscio's proof and Theorem \ref{thm:stable-shape} (paragraph 2 of Lemma 2.6 proof).
        
        Now we induct on rows of the poset to fill in the grid structure. For a fixed row of the poset, assume that $v_{i,j}^f$ must take place before $v_{i,j-1}^f$ and $v_{i,j+1}^{f-1}$, which both take place before $v_{i,j}^{f-1}$. Also assume that fires $v_{i,j+1}^{f-1}$, $v_{i,j-1}^f$, and $v_{i,j}^{f-1}$ all occur when the vertices have the minimum number of chips necessary to fire. We want to show that moves $v_{i,j+1}^f$ and $v_{i,j-1}^{f+1}$ must take place before $v_{i,j}^f$. This again follows from Klivans and Lisico's logic (paragraphs 3 and 4 of Lemma 2.6 proof) and Theorem \ref{thm:stable-shape}. The vertices where $j=0$ and $j\neq 0$ need to be considered separately, but both follow the same logic.
    \end{proof}
    
    \begin{lemma}\label{lem:mixing}
        Suppose that the endgame fire $v_{i,0}^f$ sends chip $(c'_i)$ to branch $i$. Then the endgame fire $v_{i,0}^{f-1}$ (if it exists) sends a chip smaller than $(c'_i)$ to branch $i$. In other words, each endgame center fire sends a smaller chip to a given branch than the endgame center fire before.
    \end{lemma}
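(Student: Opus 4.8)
The plan is to compare two consecutive endgame center fires by tracking exactly which chips re-enter the center between them. Fix $f \ge 1$ and write $n_1 < \cdots < n_k$ for the chips fired by $v_{i,0}^{f}$, so that $n_{i'}$ is sent to branch $i'$ and in particular $c'_i = n_i$. By condition (2) of Lemma~\ref{lem:poset}, $v_{i,0}^{f}$ fires with exactly $k$ chips, so the center is empty immediately afterward, and $v_{i,0}^{f-1}$ again fires with exactly $k$ chips. Since the center gains chips only through inward-directed level-$1$ fires, the $k$ chips present at $v_{i,0}^{f-1}$ are precisely those deposited at the center during the interval between the two center fires. The first step is to identify these deposits: relations (1a) and (1b) of Lemma~\ref{lem:poset} place the level-$1$ endgame fire $v_{i',1}^{f-1}$ of each branch $i'$ strictly between $v_{i,0}^{f}$ and $v_{i,0}^{f-1}$, and show it is the unique fire of $v_{i',1}$ there. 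Hence the center collects exactly one chip $a_{i'}$ from each branch $i'$ before $v_{i,0}^{f-1}$ fires.

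The second step is a pointwise comparison. When $v_{i',1}^{f-1}$ fires it holds exactly two chips by condition (2), one of which is $n_{i'}$: this is the chip $v_{i,0}^{f}$ just deposited on $v_{i',1}$, and it cannot have left since $v_{i',1}^{f-1}$ is the next fire of that vertex. As $v_{i',1}$ sends its smaller chip inward, the chip it contributes satisfies $a_{i'} \le n_{i'}$. Now $v_{i,0}^{f-1}$ sends the $i$-th smallest of its $k$ chips to branch $i$; writing $a_{(i)}$ for the $i$-th smallest of $\{a_1,\dots,a_k\}$, this chip is exactly $a_{(i)}$. Because the $i$-th order statistic is monotone under the pointwise bound $a_{i'} \le n_{i'}$, and the multiset $\{n_{i'}\}$ is already sorted as $n_1 < \cdots < n_k$, we obtain $a_{(i)} \le n_{i} = c'_i$. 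This is the core comparison.

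It remains to promote this to the strict bound $a_{(i)} < c'_i$, and this is the step I expect to be the main obstacle. The weak bound is tight precisely when $a_{i'} = n_{i'}$ for the branches whose contributions determine the $i$-th order statistic, that is, when each such $v_{i',1}$ returns the very chip it has just received because $n_{i'}$ is its smaller chip. The plan is therefore to control the second chip $x_{i'}$ present on $v_{i',1}$ when it fires and to show that the returned multiset $\{a_{i'}\}$ is not merely dominated by $\{n_{i'}\}$ but strictly dominated in the order statistic at position $i$. Carrying this out requires understanding how the re-sorting at the center redistributes the branch contributions -- the ``mixing'' the lemma is named for -- and locating a strict drop that survives this re-sort into position $i$. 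I expect this to be the crux, since it is exactly here that the per-branch domination argument stops being automatic and the interaction between the branches, governed by the book poset of Lemma~\ref{lem:poset} and the exact firing counts of Theorem~\ref{thm:stable-shape}, must be used in an essential way; the resulting strict inequality is what Theorem~\ref{thm:sorting} ultimately rests on.
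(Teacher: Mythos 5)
Your core comparison is correct, and it takes a genuinely different route from the paper. The paper argues by cases on the provenance of the chip that branch $i$ receives at $v_{i,0}^{f-1}$: it is the chip branch $i$ itself returned, or it came from a branch $\ell<i$ (where any returned chip is at most $c'_\ell<c'_i$), or from a branch $\ell>i$ --- the last case handled by a cycle argument locating a branch $a\le i$ whose chip moved to a branch above $i$. Your pointwise bound $a_{i'}\le n_{i'}$ (each $v_{i',1}$ fires holding exactly two chips, one of which is the freshly deposited $n_{i'}$, and sends the smaller inward), combined with monotonicity of the $i$-th order statistic, subsumes all three cases at once: among $a_1,\dots,a_i$ there are already $i$ values $\le n_i$, so $a_{(i)}\le n_i=c'_i$. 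That is cleaner and more uniform than the paper's case analysis, and your bookkeeping that exactly one level-$1$ fire per branch occurs between consecutive endgame center fires (which the paper leaves implicit) is essentially right --- with one small repair: at the boundary $f=m-1$ the fire $v_{i',1}^{m-1}$ is not an endgame fire, so relation (1b) of Lemma \ref{lem:poset} is not available there; instead use condition (2) directly, since the center is empty after $v_{i,0}^{f}$ and must hold exactly $k$ chips at $v_{i,0}^{f-1}$, forcing exactly one deposit per branch.

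The step you flag as the crux --- promoting $a_{(i)}\le c'_i$ to a strict inequality --- is not an obstacle to be overcome; it is false. Take $k=2$, $m=2$: fire the center with $\{3,4\}$, then with $\{1,2\}$ (this is the endgame fire $v_{i,0}^1$, sending $c'_1=1$ and $c'_2=2$); now $v_{1,1}=\{1,3\}$ and $v_{2,1}=\{2,4\}$ fire, returning $(1)$ and $(2)$ to the center; the final fire $v_{i,0}^0$ then sends $(1)$ to branch $1$ and $(2)$ to branch $2$ --- the very same chips as the previous endgame center fire. So the lemma can only hold in the weak sense $a_{(i)}\le c'_i$, despite its wording, and the paper's own proof concedes this: its case $1$ is precisely ``branch $i$ receives the same chip it sent to the center,'' which includes the possibility that $v_{i,1}$ bounces $c'_i$ itself straight back. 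Moreover, the only use of the lemma, in the proof of Theorem \ref{thm:sorting}, needs exactly the weak form: after $v_{i,0}^{m-1}$ no strictly larger chip ever arrives on branch $i$. So your proof is already complete once you stop at the weak inequality; the planned strictness argument should simply be deleted rather than pursued.
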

    
    \begin{proof}
        When the second center fire occurs there are three possibilities: branch $i$ receives the same chip it sent to the center, branch $i$ receives a chip from a branch $\ell<i$, or branch $i$ receives a chip from a branch $\ell>i$. Case $1$ obviously fulfills the desired property. In case $2$, we know that the largest chip we can receive from $\ell$ is bounded above by $c_{\ell}'<c_i'$, thus we must inherit a smaller chip. In case $3$, we consider that cycle that contains $i$ and $\ell$, and note that this implies that a branch larger than $i$ must have inherited a chip from a branch smaller than or equal to $i$, say chip $(c_a)$ from branch $a$. Then $c_a>c_i$, and furthermore $c_i<c_a\leq c'_a\leq c_i'$ since $a\leq i$. Thus the chip that branch $i$ received must have label smaller than $c_i$.
    \end{proof}
    
    Finally, we present a proof of Theorem \ref{thm:sorting}:
    
    \begin{proof}
        We begin by noting that Lemma $\ref{lem:mixing}$ guarantees that after fire $v^{m-1}_{i,0}$ occurs, the largest chip that will be on each branch is already there. Call this chip $(N_i)$. From this point, we note that $(N_i)$ will be at most $m-1$ positions away from $v_{i,m}$. Now, when fire $v^{m-2}_{i,1}$ occurs, $(N_i)$ will at be at most $m-2$ positions away from $v_{i,m}$. We continue with this logic and note that fire $v^{0}_{i,m-1}$ guarantees that $(N_i)$ will end up at position $v_{i,m}$. 
    
        By the same logic we know that once fire $v^{m-2}_{i,0}$ occurs, the second largest chip that will end on branch $i$ will already be on branch $i$. Now, we note that after $v^{m-2}_{i,0}$, this chip can at most be $m-2$ spots away from $v_{i,m-1}$. Furthermore, when $v^{m-3}_{i,1}$ occurs, it guarantees that this second largest chip will be at most $m-3$ positions away from $v_{i,m-1}$ (at this point in the firing, due to our poset structure, we know that this chip will never run into the largest chip). Thus by the same logic as above, we see the second largest chip will end up at position $v_{i,m-1}$.
    
        Now, by induction we see that the chips will by ordered from smallest to largest on each branch as desired.
    \end{proof}
    
    In addition to Theorem \ref{thm:sorting} which showed that each individual branch of $instar(k)$ is sorted, our next result shows a sorting property also holds for the circles of chips closest and furthest from the center.
    
    \begin{proposition}\label{prop:level1andm}
        Additionally, level 1 and level $m$ vertices are also sorted, in the sense that the chip ending on $v_{i,1}$ is smaller than the chip ending on $v_{i+1,1}$, and likewise the chip ending on $v_{i,m}$ is smaller than the chip ending on $v_{i+1,m}$, for all $i\in[k-1]$.
    \end{proposition}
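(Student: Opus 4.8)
The plan is to prove the two sorting statements separately, with a single common ingredient: the center firing rule guarantees that in \emph{every} fire of $v_{i,0}$ the chip dispatched to branch $i$ is strictly smaller than the chip dispatched to branch $i+1$ (if $v_{i,0}$ fires with chips $n_1 < \cdots < n_k$, then branch $i$ receives $n_i$ and branch $i+1$ receives $n_{i+1}$). For the level-$m$ claim I would turn this per-fire inequality into a statement about branch maxima via a monotonicity argument; for the level-$1$ claim I would instead use the book poset of Lemma \ref{lem:poset} to identify the final occupant of $v_{i,1}$.

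For level $m$, recall that by Theorem \ref{thm:sorting} the chip on $v_{i,m}$ is the largest chip on branch $i$; call it $(N_i)$. The key observation I would establish is that the largest chip currently on branch $i$ never decreases during the game. A chip leaves branch $i$ only when $v_{i,1}$ fires, and such a fire sends the \emph{smaller} of its two chosen chips to the center; the current branch maximum is never that smaller chip (it is at least the larger chosen chip), fires of $v_{i,j}$ with $j \geq 2$ keep both chips on the branch, and center fires only add chips. Moreover every chip ever on branch $i$ entered through the single center-to-$v_{i,1}$ edge. Hence the maximum of branch $i$ is non-decreasing, so $(N_i)$ is exactly the largest chip ever delivered to branch $i$ by a center fire. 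Writing $q_i^{(t)}$ for the chip sent to branch $i$ by the $t$-th center fire, we have $q_i^{(t)} < q_{i+1}^{(t)}$ for all $t$ and $N_i = \max_t q_i^{(t)}$; choosing $t^\ast$ attaining this maximum gives $N_i = q_i^{(t^\ast)} < q_{i+1}^{(t^\ast)} \leq N_{i+1}$, as desired.

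For level $1$, Theorem \ref{thm:sorting} says the chip on $v_{i,1}$ is the smallest chip on branch $i$. Here the symmetric ``branch minimum'' is \emph{not} monotone, since the smallest chip on the branch is precisely what $v_{i,1}$ sends back to the center, so I would argue differently. The last center fire $v_{i,0}^0$ sends some chip $(n_i)$ to branch $i$, where $n_1 < \cdots < n_k$; I claim this is the final chip on $v_{i,1}$. By Lemma \ref{lem:poset}, the last fire of $v_{i,1}$ precedes the last center fire (condition (1b)), and the last fire of $v_{i,2}$ precedes the last fire of $v_{i,1}$ (condition (1a)); for $m = 2$ the vertex $v_{i,2}$ never fires and for $m = 1$ the statement is immediate. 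Thus after $v_{i,1}^0$ empties $v_{i,1}$ (it fires with exactly two chips by condition (2)), the vertex $v_{i,1}$ receives nothing further until the last center fire deposits $(n_i)$, after which it is frozen. Hence the chip ending on $v_{i,1}$ is $(n_i)$, and $n_i < n_{i+1}$ gives the claim.

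I expect the main obstacle to be the fact that chips are not confined to the branch they are first sent to: a chip placed on branch $i$ can return to the center and be redistributed to another branch, so one cannot simply assert that branch $i$ retains the $i$-th smallest chips of each fire. The way around this is to locate, for each extreme position, a quantity that is immune to this mixing. For level $m$ the branch maximum never returns to the center, which makes it monotone and hence computable from the sorted center fires; for level $1$ the minimum does return, so the poset of Lemma \ref{lem:poset} is needed instead, precisely to certify that all level-$1$ and level-$2$ activity on branch $i$ has ceased by the time of the last center fire, freezing $v_{i,1}$ at the sorted value $(n_i)$.
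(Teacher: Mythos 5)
Your proof is correct and follows essentially the same route as the paper: the level-$1$ claim comes from the fact that the final fire is the sorted center fire $v_{i,0}^0$ (which deposits the $i$-th smallest of its $k$ chips onto the emptied vertex $v_{i,1}$), and the level-$m$ claim comes from the observation that a branch's maximum chip is never the one returned to the center, so the per-fire inequality between what branches $i$ and $i+1$ receive propagates to the branch maxima. Your monotone-maximum packaging of the second half is just a slightly more explicit version of the paper's argument, which instead tracks the single chip $(d) > N_i$ delivered to branch $i+1$ by the same center fire that delivered $(N_i)$ to branch $i$.
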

    
    \begin{proof}
        The sorting of the level $1$ vertices follows immediately from the fact that the last fire must be $v_{i,0}^0$.
    
        We now show that the level $m$ vertices sort as well: Consider the largest chip $(c)$ on branch $i$. Then we consider the fire that sent $(c)$ from the center to branch $i$. During this fire a larger chip with label $d>c$ must have been sent to branch $i+1$, and the only way for $(d)$ to return to the center is if a chip larger than $(d)$ fires into the branch. Thus the largest chip on branch $i$ must be smaller than the largest chip on branch $i+1$.
    \end{proof}

    \justify
    \section{Categorization of Reachable Stable Configurations}\label{sec:reachable-characterization}
    \subsection[Shape (k x 2) SYT and Catalan Numbers]{Shape $k\times 2$ SYT and Catalan Numbers}\label{subsec:Catalan-SYTs}

    Some reachable stable configurations are particularly nice. For example, when $m=2$, the reachable stable configurations are counted by the Catalan numbers, which we show using standard Young tableaux. A \textit{standard Young tableau} is a Young diagram of size $\lambda=(\lambda_1,\lambda_2,\ldots,\lambda_k)\in\Z^k$, where $\sum_{i=1}^k\lambda_i=N$, that has been filled such that
    \begin{enumerate}
        \item each cell is filled with a unique number in $[N]$,
        
        \item the entries of each row are strictly increasing as read from left to right, and
        
        \item the entries of each column are strictly increasing as read from top to bottom.
    \end{enumerate}
    It is well known (see, for instance, Item (ww) in \cite{stanleycatalanexcerpt}) that standard Young tableaux of shape $(2,\ldots,2)\in\Z^k$ are counted by the Catalan number $\frac{1}{1+k}\binom{2k}{k}$. Henceforth, we will use $(2)^k:=(2,\ldots,2)\in\Z^k$.
    
    \begin{theorem}\label{thm:Catalan-bijection}
        Let $\T^{k,m}$ denote the set of all reachable stable configurations of $\Delta^{k,m}$, and let $T^{k,m}:=|\T^{k,m}|$.
        
        We let $m=2$. The configurations of $\T^{k,2}$ are in bijection with standard Young tableaux of shape $(2)^k$ filled with the integers in $[2k]$, and are thus counted by the Catalan numbers.
    
        The bijection $\varphi:\T^{k,2}\to\{Y:Y\text{ is a standard Young tableau of shape }(2)^k\}$ is as follows: given a configuration $\mathcal{C}\in\T^{k,2}$, we have
        \[\varphi(\mathcal{C}) = \ytableausetup{centertableaux, boxsize = 3.5em}
        \begin{ytableau}
            \mathcal{C}(v_{1,1}) & \mathcal{C}(v_{1,2}) \\
            \mathcal{C}(v_{2,1}) & \mathcal{C}(v_{2,2}) \\
            \vdots & \vdots \\
            \mathcal{C}(v_{k,1}) & \mathcal{C}(v_{k,2}) \\
        \end{ytableau}\]
        where each row of the tableau represents a branch of $\instar(k)$ and each column a level of vertices, and for a standard Young tableau
        \[Y=\ytableausetup{centertableaux, boxsize = 2em}
        \begin{ytableau}
            y_{1,1} & y_{1,2} \\
            y_{2,1} & y_{2,2} \\
            \vdots & \vdots \\
            y_{k,1} & y_{k,2} \\
        \end{ytableau}\]
        we have $\varphi^{-1}(Y)=\mathcal{C}$ where 
        \[\mathcal{C}(v_{i,j}) = \begin{cases}
            \{(y_{i,j})\} &\text{if }(i,j)\in[k]\times[2] \\
            \emptyset &\text{otherwise}
        \end{cases}.\]
    \end{theorem}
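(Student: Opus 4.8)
The plan is to verify that $\varphi$ is well defined (that it really lands in the set of standard Young tableaux), that it is injective, and that the stated inverse $\varphi^{-1}$ produces genuinely reachable stable configurations; the bijection then yields the Catalan count immediately from the known enumeration of shape-$(2)^k$ tableaux.

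First I would check well-definedness. By Theorem \ref{thm:stable-shape} (with $m=2$), every reachable stable configuration $\mathcal{C}\in\T^{k,2}$ places exactly one chip on each of $v_{i,1}$ and $v_{i,2}$ and none elsewhere, so the array $\varphi(\mathcal{C})$ has each of the $2k$ labels appearing exactly once. Strict increase along rows, i.e. $\mathcal{C}(v_{i,1})<\mathcal{C}(v_{i,2})$, is exactly the $m=2$ instance of Theorem \ref{thm:sorting}, and strict increase down the two columns, i.e. $\mathcal{C}(v_{i,1})<\mathcal{C}(v_{i+1,1})$ and $\mathcal{C}(v_{i,2})<\mathcal{C}(v_{i+1,2})$, is precisely Proposition \ref{prop:level1andm}. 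Hence $\varphi(\mathcal{C})$ is a standard Young tableau of shape $(2)^k$. Injectivity is then automatic: since (again by Theorem \ref{thm:stable-shape}) a configuration in $\T^{k,2}$ is supported on $\{v_{i,1},v_{i,2}\}$, it is completely determined by the data $(\mathcal{C}(v_{i,1}),\mathcal{C}(v_{i,2}))_{i\in[k]}$ that $\varphi$ records.

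The heart of the argument is surjectivity, for which I would exhibit, for an arbitrary tableau $Y$ with columns $L_1=\{y_{1,1},\dots,y_{k,1}\}$ and $L_2=\{y_{1,2},\dots,y_{k,2}\}$, an explicit stabilizing sequence from $\Delta^{k,2}$ ending at $\varphi^{-1}(Y)$. Recall from Theorem \ref{thm:stable-shape} that the center fires three times and each $v_{i,1}$ fires once, and that for $m=2$ this forces the firing pattern: two center fires, then the $k$ level-$1$ fires (in any order, as the branches act independently), then a final center fire. The only free choice is the $k$-subset selected in the first center fire, and I would take it to be $L_1$. Then the first fire sends $y_{i,1}$ to $v_{i,1}$ (the column-$1$ entries are already sorted), the forced second fire sends the remaining set $L_2$ with $y_{i,2}$ landing on $v_{i,1}$, so $v_{i,1}$ momentarily holds $\{y_{i,1},y_{i,2}\}$. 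The key point is that the row condition $y_{i,1}<y_{i,2}$ guarantees that firing $v_{i,1}$ returns the smaller chip $y_{i,1}$ to the center and pushes $y_{i,2}$ out to $v_{i,2}$; thus after all level-$1$ fires the center again holds exactly $L_1$ while $v_{i,2}$ holds $y_{i,2}$. The final center fire redistributes $L_1$ in sorted order, placing $y_{i,1}$ on $v_{i,1}$. The resulting configuration is $\varphi^{-1}(Y)$, and it is stable, since each occupied vertex holds a single chip, below its degree.

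I expect the main obstacle to be precisely this reachability step: one must confirm both that the chosen sequence is legal at every move (so it should be checked that the center is the unique firable vertex until the level-$1$ vertices are loaded with two chips, which makes the second and third center fires forced and prevents a premature center fire while the level-$1$ vertices discharge) and that the minima returning to the center really are the column-$1$ entries in the correct sorted order, which is exactly where the row-monotonicity of $Y$ is used essentially. Once this is in place, $\varphi\circ\varphi^{-1}=\mathrm{id}$ is immediate and, combined with the injectivity above, shows $\varphi$ is a bijection; the count $\frac{1}{k+1}\binom{2k}{k}$ then follows from the cited enumeration of standard Young tableaux of shape $(2)^k$.
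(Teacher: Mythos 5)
Your proposal is correct and follows essentially the same route as the paper: well-definedness via Theorem \ref{thm:stable-shape}, Theorem \ref{thm:sorting}, and Proposition \ref{prop:level1andm}, and surjectivity via an explicit four-stage firing sequence (two center fires, the $k$ level-$1$ fires, one final center fire), with column-sortedness governing the center fires and row-sortedness governing the level-$1$ fires. The only difference is immaterial --- you select the first column in the first center fire where the paper selects the second --- and your extra remarks on why the sequence is legal and essentially forced only strengthen the argument.
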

    
    \begin{proof}
        First, we show that every reachable stable configuration can be mapped to a standard Young tableau of shape $(2)^k$. Suppose we have some reachable stable configuration $\mathcal{C}$. We know by Theorem \ref{thm:stable-shape} that there is one chip on each vertex $v_{i,j}$ for $(i,j)\in[k]\times[m]$, so $\varphi(\mathcal{C})$ is a shape $(2)^k$ Young tableau. By Theorem \ref{thm:sorting}, we know that $\varphi(\mathcal{C})$ will be sorted in increasing order along its rows. Additionally, by Proposition \ref{prop:level1andm}, it must also be sorted along its first and last, and in fact only two, columns. Thus $\varphi(\mathcal{C})$ is a shape $(2)^k$ standard Young tableau.
    
        Next, we show that every standard Young tableau of shape $(2)^k$ maps by $\varphi^{-1}$ to a unique reachable stable configuration of $\Delta^{k,2}$. Starting from $\Delta^{k,2}$, fire chips $\{(y_{i,2})\}_{i=1}^k$ from the center. Each $(y_{i,2})$ is necessarily sent to $v_{i,1}$, the innermost vertex of the corresponding branch, because $Y$ must be sorted in increasing order along columns. Next, fire the center again. Now all vertices are empty except those in $\{v_{i,2}\}_{i=1}^k$, which have $\mathcal{C}(v_{i,j})=\{(y_{i,1}),(y_{i,2})\}$. Next, fire all of these vertices. The chips $(y_{i,1})$ will be sent back to the center while the $(y_{i,2})$ chips will be sent to $v_{i,2}$, since $Y$ must be sorted in increasing order along its rows. Finally, fire the center again to send each $(y_{i,1})$ chip to $v_{i,1}$. Thus
        \[\mathcal{C}(v_{i,j})=\begin{cases}
            \{(y_{i,j})\} &\text{if }(i,j)\in[k]\times[2] \\
            \emptyset &\text{otherwise}
        \end{cases}\]
        as desired. Thus each standard Young tableau of shape $(2)^k$ maps to a reachable stable configuration. It is clear that each of these configurations will be unique.
    \end{proof}

    \justify
    \subsection{Rectangular SYTs}\label{subsec:SYTs}

    A bijection between elements of $\T^{k,m}$ and standard Young tableaux of shape $(m)^k$ does not exist in this way for $m>2$.
    
    \begin{example}\label{ex:SYT-counterexample}
        Suppose $(k,m)=(3,3)$. Observe the sequence of fires shown in Figure \ref{fig:catalan-counterexample} starting from $\Delta^{3,3}$. The configuration shown would correspond to the tableau
        \[\ytableausetup{centertableaux, boxsize = 2em}
        \begin{ytableau}
            1 & \RED{4} & 7 \\
            2 & \RED{3} & 8 \\
            5 & 6 & 9 \\
        \end{ytableau}\]
        which is non-standard (note column 2).
    \end{example}
    
        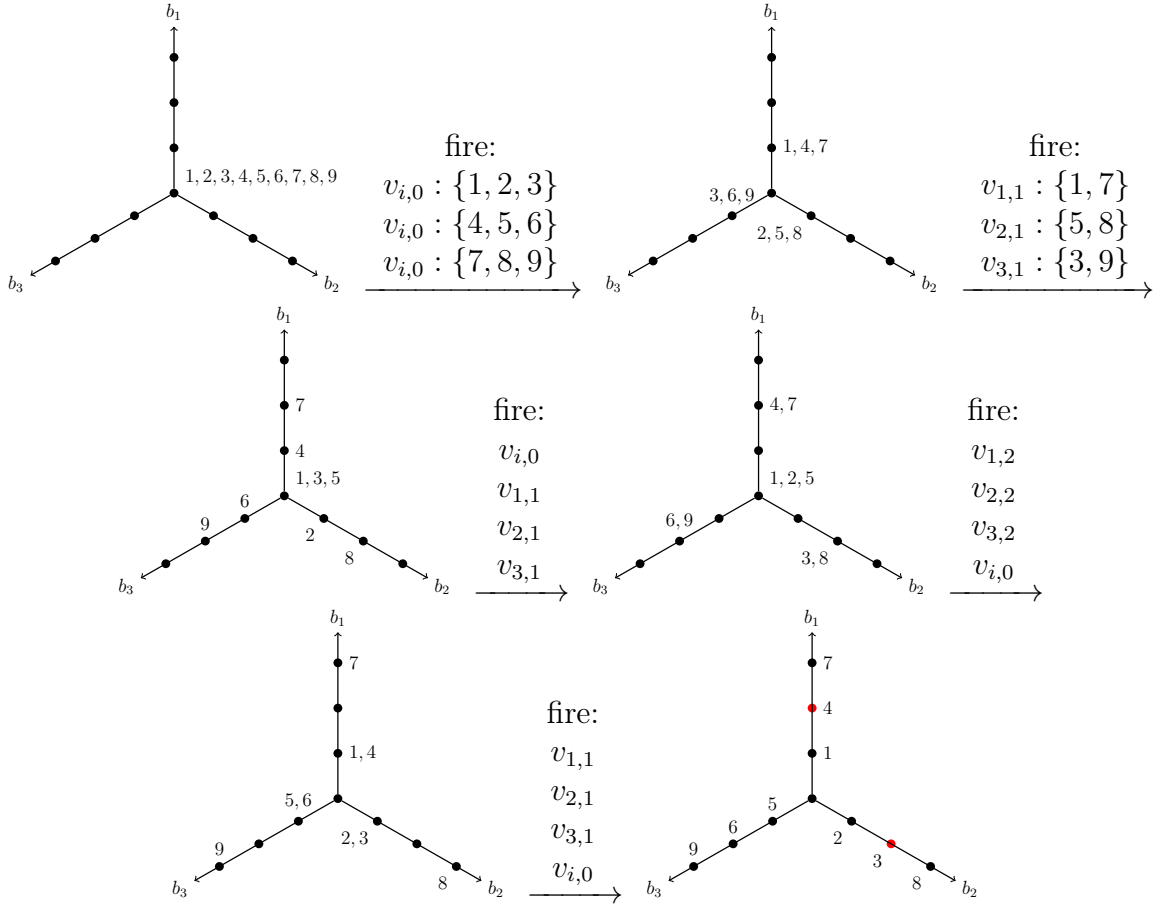
\begin{figure}[ht]
            \centering
            \scalebox{0.6}{
            \begin{tikzpicture}[scale=0.5]
                \node[circle,fill=black,scale=0.5,label={375:$1,2,3,4,5,6,7,8,9$}] at (360:0mm) (center) {};
                \node at (90:8cm) (n1) {$b_1$};
                \node[circle,fill=black,scale=0.5] at (90:2cm) (m1) {};
                \node[circle,fill=black,scale=0.5] at (90:4cm) (l1) {};
                \node[circle,fill=black,scale=0.5] at (90:6cm) (p1) {};
                \draw[->, thick] (center)--(n1);
                \node at (330:8cm) (n2) {$b_2$};
                \node[circle,fill=black,scale=0.5] at (330:2cm) (m2) {};
                \node[circle,fill=black,scale=0.5] at (330:4cm) (l2) {};
                \node[circle,fill=black,scale=0.5] at (330:6cm) (p2) {};
                \draw[->, thick] (center)--(n2);
                \node at (210:8cm) (n3) {$b_3$};
                \node[circle,fill=black,scale=0.5] at (210:2cm) (m3) {};
                \node[circle,fill=black,scale=0.5] at (210:4cm) (l3) {};
                \node[circle,fill=black,scale=0.5] at (210:6cm) (p3) {};
                \draw[->, thick] (center)--(n3);
            \end{tikzpicture}
            }
            $\xrightarrow{\begin{array}{c}
                \text{fire:} \\
                v_{i,0}:\{1,2,3\} \\
                v_{i,0}:\{4,5,6\} \\
                v_{i,0}:\{7,8,9\} \\
            \end{array}}$
            \scalebox{0.6}{
            \begin{tikzpicture}[scale=0.5]
                \node[circle,fill=black,scale=0.5] at (360:0mm) (center) {};
                \node at (90:8cm) (n1) {$b_1$};
                \node[circle,fill=black,scale=0.5,label={360:$1,4,7$}] at (90:2cm) (m1) {};
                \node[circle,fill=black,scale=0.5] at (90:4cm) (l1) {};
                \node[circle,fill=black,scale=0.5] at (90:6cm) (p1) {};
                \draw[->, thick] (center)--(n1);
                \node at (330:8cm) (n2) {$b_2$};
                \node[circle,fill=black,scale=0.5,label={-120:$2,5,8$}] at (330:2cm) (m2) {};
                \node[circle,fill=black,scale=0.5] at (330:4cm) (l2) {};
                \node[circle,fill=black,scale=0.5] at (330:6cm) (p2) {};
                \draw[->, thick] (center)--(n2);
                \node at (210:8cm) (n3) {$b_3$};
                \node[circle,fill=black,scale=0.5,label={90:$3,6,9$}] at (210:2cm) (m3) {};
                \node[circle,fill=black,scale=0.5] at (210:4cm) (l3) {};
                \node[circle,fill=black,scale=0.5] at (210:6cm) (p3) {};
                \draw[->, thick] (center)--(n3);
            \end{tikzpicture}
            }
            $\xrightarrow{\begin{array}{c}
                \text{fire:} \\
                v_{1,1}:\{1,7\} \\
                v_{2,1}:\{5,8\} \\
                v_{3,1}:\{3,9\} \\
            \end{array}}$
            \scalebox{0.6}{
            \begin{tikzpicture}[scale=0.5]
                \node[circle,fill=black,scale=0.5,label={375:$1,3,5$}] at (360:0mm) (center) {};
                
                \node at (90:8cm) (n1) {$b_1$};
                \node[circle,fill=black,scale=0.5,label={360:$4$}] at (90:2cm) (m1) {};
                \node[circle,fill=black,scale=0.5,label={360:$7$}] at (90:4cm) (l1) {};
                \node[circle,fill=black,scale=0.5] at (90:6cm) (p1) {};
                \draw[->, thick] (center)--(n1);
                \node at (330:8cm) (n2) {$b_2$};
                \node[circle,fill=black,scale=0.5,label={-120:$2$}] at (330:2cm) (m2) {};
                \node[circle,fill=black,scale=0.5,label={-120:$8$}] at (330:4cm) (l2) {};
                \node[circle,fill=black,scale=0.5] at (330:6cm) (p2) {};
                \draw[->, thick] (center)--(n2);
                \node at (210:8cm) (n3) {$b_3$};
                \node[circle,fill=black,scale=0.5,label={90:$6$}] at (210:2cm) (m3) {};
                \node[circle,fill=black,scale=0.5,label={90:$9$}] at (210:4cm) (l3) {};
                \node[circle,fill=black,scale=0.5] at (210:6cm) (p3) {};
                \draw[->, thick] (center)--(n3);
            \end{tikzpicture}
            }
            $\xrightarrow{\begin{array}{c}
                \text{fire:} \\
                v_{i,0} \\
                v_{1,1} \\
                v_{2,1} \\
                v_{3,1} \\
            \end{array}}$
            \scalebox{0.6}{
            \begin{tikzpicture}[scale=0.5]
                \node[circle,fill=black,scale=0.5,label={375:$1,2,5$}] at (360:0mm) (center) {};
                \node at (90:8cm) (n1) {$b_1$};
                \node[circle,fill=black,scale=0.5] at (90:2cm) (m1) {};
                \node[circle,fill=black,scale=0.5,label={360:$4,7$}] at (90:4cm) (l1) {};
                \node[circle,fill=black,scale=0.5] at (90:6cm) (p1) {};
                \draw[->, thick] (center)--(n1);
                \node at (330:8cm) (n2) {$b_2$};
                \node[circle,fill=black,scale=0.5] at (330:2cm) (m2) {};
                \node[circle,fill=black,scale=0.5,label={-120:$3,8$}] at (330:4cm) (l2) {};
                \node[circle,fill=black,scale=0.5] at (330:6cm) (p2) {};
                \draw[->, thick] (center)--(n2);
                \node at (210:8cm) (n3) {$b_3$};
                \node[circle,fill=black,scale=0.5] at (210:2cm) (m3) {};
                \node[circle,fill=black,scale=0.5,label={90:$6,9$}] at (210:4cm) (l3) {};
                \node[circle,fill=black,scale=0.5] at (210:6cm) (p3) {};
                \draw[->, thick] (center)--(n3);
            \end{tikzpicture}
            }
            $\xrightarrow{\begin{array}{c}
                \text{fire:} \\
                v_{1,2} \\
                v_{2,2} \\
                v_{3,2} \\
                v_{i,0} \\
            \end{array}}$
            \scalebox{0.6}{
            \begin{tikzpicture}[scale=0.5]
                \node[circle,fill=black,scale=0.5] at (360:0mm) (center) {};
                \node at (90:8cm) (n1) {$b_1$};
                \node[circle,fill=black,scale=0.5,label={360:$1,4$}] at (90:2cm) (m1) {};
                \node[circle,fill=black,scale=0.5] at (90:4cm) (l1) {};
                \node[circle,fill=black,scale=0.5,label={360:$7$}] at (90:6cm) (p1) {};
                \draw[->, thick] (center)--(n1);
                \node at (330:8cm) (n2) {$b_2$};
                \node[circle,fill=black,scale=0.5,label={-120:$2,3$}] at (330:2cm) (m2) {};
                \node[circle,fill=black,scale=0.5] at (330:4cm) (l2) {};
                \node[circle,fill=black,scale=0.5,label={-120:$8$}] at (330:6cm) (p2) {};
                \draw[->, thick] (center)--(n2);
                \node at (210:8cm) (n3) {$b_3$};
                \node[circle,fill=black,scale=0.5,label={90:$5,6$}] at (210:2cm) (m3) {};
                \node[circle,fill=black,scale=0.5] at (210:4cm) (l3) {};
                \node[circle,fill=black,scale=0.5,label={90:$9$}] at (210:6cm) (p3) {};
                \draw[->, thick] (center)--(n3);
            \end{tikzpicture}
            }
            $\xrightarrow{\begin{array}{c}
                \text{fire:} \\
                v_{1,1} \\
                v_{2,1} \\
                v_{3,1} \\
                v_{i,0} \\
            \end{array}}$
            \scalebox{0.6}{
            \begin{tikzpicture}[scale=0.5]
                \node[circle,fill=black,scale=0.5] at (360:0mm) (center) {};
                \node at (90:8cm) (n1) {$b_1$};
                \node[circle,fill=black,scale=0.5,label={360:$1$}] at (90:2cm) (m1) {};
                \node[circle,fill=red,scale=0.5,label={360:$4$}] at (90:4cm) (l1) {};
                \node[circle,fill=black,scale=0.5,label={360:$7$}] at (90:6cm) (p1) {};
                \draw[->, thick] (center)--(n1);
                \node at (330:8cm) (n2) {$b_2$};
                \node[circle,fill=black,scale=0.5,label={-120:$2$}] at (330:2cm) (m2) {};
                \node[circle,fill=red,scale=0.5,label={-120:$3$}] at (330:4cm) (l2) {};
                \node[circle,fill=black,scale=0.5,label={-120:$8$}] at (330:6cm) (p2) {};
                \draw[->, thick] (center)--(n2);
                \node at (210:8cm) (n3) {$b_3$};
                \node[circle,fill=black,scale=0.5,label={90:$5$}] at (210:2cm) (m3) {};
                \node[circle,fill=black,scale=0.5,label={90:$6$}] at (210:4cm) (l3) {};
                \node[circle,fill=black,scale=0.5,label={90:$9$}] at (210:6cm) (p3) {};
                \draw[->, thick] (center)--(n3);
            \end{tikzpicture}
            }
            
            \caption{\justifying Counterexample to a generalized $\T^{k,m}\leftrightarrow \{${standard Young tableaux of shape }$(m)^k\}$ bijection.}
            \label{fig:catalan-counterexample}
        \end{figure}
    
    Still, we have that all reachable stable configurations correspond to row-increasing tableaux (though not every row-increasing tableau correspond to reachable configurations), and in fact we know every standard Young tableaux of shape $(m)^k$ still corresponds to an element of $\T^{k,m}$:
    
    \begin{theorem}\label{SYT-to-stable-only}
        Every element of $\{Y:Y\text{ is a standard Young tableau of shape }(m)^k\}$ corresponds to a unique element $\mathcal{C}$ of $\T^{k,m}$, where $\mathcal{C}(v_{i,j})=\{(y_{i,j})\}$ for $(i,j)\in[k]\times[m]$.
    \end{theorem}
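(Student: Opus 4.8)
The plan is to prove reachability by an explicit firing sequence; everything else is immediate. The configuration $\mathcal{C}$ with $\mathcal{C}(v_{i,j})=\{(y_{i,j})\}$ for $(i,j)\in[k]\times[m]$ and $\mathcal{C}(v)=\emptyset$ elsewhere is stable, since every occupied vertex carries a single chip and the center is empty, and it has exactly the shape forced by Theorem \ref{thm:stable-shape}. Distinct standard Young tableaux clearly yield distinct placements, so the only substantive task is to show that $\mathcal{C}$ is reachable from $\Delta^{k,m}$.

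The key structural observation is that the rows of a standard Young tableau are order-isomorphic: in row $i$ the entries satisfy $y_{i,1}<y_{i,2}<\cdots<y_{i,m}$, so $(y_{i,c})$ is the $c$-th smallest chip of its row for every $i$. Since every branch firing rule depends only on the relative order of the labels present, the single-branch dynamics on branch $i$, viewed through the ``color'' $c$ recording the target column of each chip, are identical across all branches. I would exploit this to run the $k$ branches in lockstep: fix any legal single-branch stabilization schedule for a ray carrying $m$ chips, and apply the very same sequence of moves, indexed by color, simultaneously to all branches (making the same choice, say always emitting the smallest color present, at each center emission). The branch fires at distinct branches act on disjoint vertices and so commute; the center fires are bundled. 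Because the branches evolve identically color-by-color, at every moment the multiset of colors present at the center is the same on all branches, so whenever the schedule calls for a center fire a complete column $\{(y_{1,c}),\ldots,(y_{k,c})\}$ is available, and I fire the center using exactly these $k$ chips.

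It is precisely here that column-strictness is used. Since $Y$ is standard, $y_{1,c}<y_{2,c}<\cdots<y_{k,c}$, so firing the complete column $c$ sends its $i$-th smallest chip $(y_{i,c})$ to branch $i$, returning every chip to its home branch. Consequently no chip ever strays to a foreign branch, and branch $i$ only ever holds chips of row $i$. The evolution on branch $i$ is therefore exactly single-branch labeled chip-firing of the $m$ chips of row $i$, which by the $k=1$ cases of Theorems \ref{thm:stable-shape} and \ref{thm:sorting} terminates with one chip per level in increasing order; that is, the $j$-th smallest chip of row $i$, namely $(y_{i,j})$, lands on $v_{i,j}$. Running all branches in lockstep, the resulting global sequence is a legal stabilization of $\Delta^{k,m}$ reaching $\mathcal{C}$.

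I expect the main obstacle to be making the lockstep synchronization fully rigorous: verifying that identical color-indexed schedules keep the colors aligned across branches, so that a complete column is present exactly when a center fire is demanded, and that each bundled center fire is legal and routes every chip home. This is the step in which the column-strict condition is indispensable, and it is what separates this result from the weaker row-increasing condition guaranteed for all reachable configurations: for a merely row-increasing tableau, firing an unsorted column $c$ would send $(y_{i,c})$ to the wrong branch and irreparably mix rows, which is the obstruction witnessed by Example \ref{ex:SYT-counterexample} and the reason no such bijection exists for $m>2$.
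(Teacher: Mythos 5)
Your proposal is correct and uses essentially the same mechanism as the paper's proof: fire the center only with complete columns so that column-strictness routes every chip back to its home branch, and then let the row-increasing order (equivalently, the $k=1$ sorting result) place chip $(y_{i,j})$ on $v_{i,j}$. The paper writes out one explicit wave-like firing schedule rather than phrasing it as a lockstep reduction to the single-branch case, but the underlying argument is the same.
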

    
    \begin{proof}
        Consider the function $\psi:\{Y:Y\text{ is an SYT of shape }(m)^k\}\to\T^{k,m}$ such that
        \[\psi\left(\hspace{0.25cm}\begin{ytableau}
            y_{1,1} & y_{1,2} & \cdots & y_{1,m} \\
            y_{2,1} & y_{2,2} & \cdots & y_{2,m} \\
            \vdots  & \vdots  & \ddots & \vdots \\
            y_{k,1} & y_{k,2} & \cdots & y_{k,m} \\
        \end{ytableau}\hspace{0.25cm}\right) = \mathcal{C}\]
        with
        \[\mathcal{C}(v_{i,j})=\begin{cases}
            \{(y_{i,j})\} &\text{if }(i,j)\in[k]\times[m] \\
            \emptyset &\text{otherwise}.
        \end{cases}\]
    
        It is clear that by construction, any two distinct SYT will correspond to two distinct configurations on $\instar(k)$, so $\psi$ is injective. We want to show that these are actually reachable stable configurations.
    
        Let $Y$ be an $(m)^k$ SYT with entries indexed as shown above. First, from $\Delta^{k,m}$, fire the center with chips $\{(y_{i,m})\}_{i=1}^k$. We know the chip $(y_{i,m})$ goes to branch $i$ since $Y$ is a standard Young tableau and therefore entries increase down columns. Then fire the center again with chips $\{(y_{i,m-1})\}_{i=1}^k$. Now the level 1 vertices are ready to fire. Fire each level 1 vertex so that $(y_{i,m})$ goes to $v_{i,2}$ and $(y_{i,m-1})$ goes to $v_{i,0}$ for each $i\in[k]$. We know the $(y_{i,m})$ chips move away from the center since $Y$ is a standard Young tableau with entries increasing along rows. Now fire the center again with chips $\{(y_{i,m-1})\}_{i=1}^k$, and again with chips $\{(y_{i,m-2})\}_{i=1}^k$. Now the level 1 vertices are ready to fire. Fire them so that chips $(y_{i,m})$ and $(y_{i,m-1})$ are on $v_{i,2}$ and $(y_{i,m-2})$ goes back to the center for each $i\in[k]$. Now fire the level 2 vertices so that $(y_{i,m})$ lands on $v_{i,3}$ and $(y_{i,m-1})$ lands on $v_{i,2}$ for each $i\in[k]$. Continue in this manner until the configuration has been stabilized. The resulting configuration will be $\psi(Y)$.
    \end{proof}
    
    A natural question to ask is if we can guarantee that a final configuration will map to an SYT by imposing some constraints on the firing. In other words, we are interested in specific firing techniques that always yield these SYTs. For this purpose we introduce what we call ``volatility-minimizing'' firing and show that SYTs of shape $(m)^k$ are in bijection with the set of final configurations that occur under this firing strategy. 
    
    \begin{definition}
        We say a firing sequence is ``volatility-minimizing'' if at all steps in the chip-firing process, vertices can only be chosen to fire if they keep the number of ready-to-fire vertices to the minimum possible out of all firing options. If there are multiple vertices that satisfy this property, choose one that is the furthest from the center.
    \end{definition}
    
    It is not difficult to convince oneself that such a firing process will proceed as follows for the star graph: fire the center two times, fire each of the branches once at level $1$. Fire the center two more times, then for each branch, fire the level 1 vertex, then the level 2 vertex, and so on. It turns out that the image of $\psi$ is precisely the set of final configurations reached with volatility-minimizing firing sequences. Thus $T(k,m)$ from Entry A060854 of the OEIS (\cite{oeis}) counts the number of such configurations. When $m=2$, this specializes to the Catalan numbers. 
    
    In order to prove the final result we require the following lemma:
    
    \begin{lemma}\label{lem:Matrix-Sorting}(See, for instance, ``Rows and Columns" on page 23 of \cite{winkler})
        Given a matrix such that every column is entry-wise increasing one can create a matrix that is also row-increasing in the following way: take each row in the matrix and sort the entries of that row.
    \end{lemma}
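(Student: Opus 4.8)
The plan is to show that sorting the rows of a column-increasing matrix cannot destroy the column-increasing property; since sorting manifestly makes each row increasing, the output is then increasing along both rows and columns, which is exactly the assertion. Write $M=(M_{i,j})$ for the given matrix, with $n$ columns and with every column strictly increasing from top to bottom, and let $M'=(M'_{i,j})$ be the matrix obtained by sorting each row of $M$ into increasing order, so that $M'_{i,1}\le M'_{i,2}\le\cdots\le M'_{i,n}$ for every row $i$. It suffices to fix two vertically adjacent entries $M'_{i,j}$ and $M'_{i+1,j}$ and prove $M'_{i,j}<M'_{i+1,j}$.

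First I would set $a:=M'_{i,j}$, the $j$-th smallest entry of row $i$. The key move is to track the \emph{largest} $n-j+1$ entries of row $i$ rather than the smallest $j$: let $U$ be the set of columns carrying the entries that land in sorted positions $j,j+1,\dots,n$ of row $i$, so $|U|=n-j+1$ and $M_{i,c}\ge a$ for every $c\in U$. Because each column of $M$ is strictly increasing, $M_{i+1,c}>M_{i,c}\ge a$ for every $c\in U$, so row $i+1$ contains at least $n-j+1$ entries strictly greater than $a$. Hence at most $n-(n-j+1)=j-1$ entries of row $i+1$ are $\le a$, which forces the $j$-th smallest entry of row $i+1$ to exceed $a$; that is, $M'_{i+1,j}>a=M'_{i,j}$, the desired column inequality.

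Since $i$ and $j$ were arbitrary, every column of $M'$ is strictly increasing, and together with the row-increasing property built in by the sort this establishes the lemma. There is no deep obstacle here; the only point that genuinely requires thought is the choice of which entries to propagate downward. Tracking the $j$ smallest entries of row $i$ and pushing them down through the column inequality does not directly control the $j$-th order statistic of row $i+1$, whereas tracking the top $n-j+1$ entries and counting complementarily in row $i+1$ does. I would also note that the argument uses only the strict column hypothesis: possible repeated values within a single row cause no difficulty, since $U$ is defined as a set of $n-j+1$ distinct columns regardless of ties, and in the applications of this paper the entries lie in $[N]$ and are in any case distinct.
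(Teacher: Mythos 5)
Your argument is correct: tracking the columns that carry the top $n-j+1$ entries of row $i$ and pushing them down via the strict column inequality gives exactly the counting bound needed to control the $j$-th order statistic of row $i+1$, and this is the standard proof of this classical ``non-messing-up'' fact. The paper itself supplies no proof, citing Winkler's ``Rows and Columns'' puzzle instead, so your write-up is a complete and correct substitute for that citation; your closing remark that only the strict column hypothesis is needed (ties within a row being harmless) is also accurate and matches the setting of the paper, where all entries are distinct.
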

    
    \begin{theorem}The set of final configurations resulting from ``volatility-minimizing'' firing sequences of  $\Delta^{k,m}$ is in bijection with the set of SYT of shape $(m)^k$.
    \end{theorem}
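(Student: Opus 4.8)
The plan is to prove that the image of the injection $\psi$ from Theorem~\ref{SYT-to-stable-only} is \emph{exactly} the set of final configurations produced by volatility-minimizing sequences; since $\psi$ is a bijection onto its image and distinct SYT give distinct configurations, $\psi^{-1}$ then furnishes the claimed bijection. I would split this into two inclusions: (A) for every SYT $Y$ of shape $(m)^k$ there is a volatility-minimizing sequence terminating at $\psi(Y)$, and (B) every volatility-minimizing final configuration equals $\psi(Y)$ for some SYT $Y$. The common first step is to pin down the \emph{vertex}-firing order that the volatility-minimizing rule forces. One checks directly from the readiness count that at each rest state (only the center ready) the center must fire, that firing it twice is what first creates ready vertices, and that thereafter firing the outermost ready vertices minimizes the ready count and wins the ``furthest from the center'' tie-break. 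Thus the order is deterministic up to the irrelevant ordering of independent same-level vertices: two center fires, then an outward wave firing level $1$, level $2$, \dots as far as readiness allows, repeated each round (with the final rounds adapting once the center runs low on chips). The only freedom is \emph{which} $k$ chips are chosen at each center fire.

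For (A), I would observe that the stabilization constructed in the proof of Theorem~\ref{SYT-to-stable-only} already proceeds along precisely this deterministic vertex order: it fires the center twice, then the level-$1$ wave, and in each subsequent round fires the center twice more and pushes the wave one level further, adapting identically to volatility-minimizing in the last rounds when the center holds fewer than $2k$ chips (one verifies this matches step by step, as in the $k=1,\,m=3$ trace). Hence that construction is itself a volatility-minimizing sequence, and since it terminates at $\psi(Y)$, the image of $\psi$ is contained in the set of volatility-minimizing configurations.

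The heart of the argument is (B). Rows of the final tableau are already increasing by Theorem~\ref{thm:sorting}, so it suffices to show the configuration is column-increasing, i.e. $\mathcal{C}(v_{1,j})<\cdots<\mathcal{C}(v_{k,j})$ for every level $j$. I would prove a column-sorting invariant by induction on the firing steps: at every stage of a volatility-minimizing sequence, each level-$j$ vertex ($j\geq 1$) carries at most two chips, and both $\min\mathcal{C}(v_{i,j})$ and $\max\mathcal{C}(v_{i,j})$ are strictly increasing in the branch index $i$. This is preserved by the two move types. A center fire deposits chips $n_1<\cdots<n_k$ onto level $1$ with branch $i$ receiving the strictly-increasing-in-$i$ chip $n_i$; stacking this on a retained layer that is already min/max-monotone keeps both extremes monotone, since the coordinatewise minimum and maximum of two sequences increasing in $i$ are again increasing in $i$. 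A synchronized level-$j$ wave—synchronized because the underlying unlabeled dynamics are identical on every branch (Theorem~\ref{thm:stable-shape}), so either all $k$ level-$j$ vertices fire or none do—sends $\min\mathcal{C}(v_{i,j})$ inward and $\max\mathcal{C}(v_{i,j})$ outward, and the inequalities $a_i<a_{i+1},\,b_i<b_{i+1}$ force both $\min(a_i,b_i)<\min(a_{i+1},b_{i+1})$ and $\max(a_i,b_i)<\max(a_{i+1},b_{i+1})$, so the invariant propagates outward. At stabilization every $v_{i,j}$ with $j\in[m]$ holds a single chip (Theorem~\ref{thm:stable-shape}), so ``$\min,\max$ increasing in $i$'' collapses to the column-increasing condition; together with the row condition this says the final configuration is an SYT, hence lies in the image of $\psi$. (This is the dynamic counterpart of Lemma~\ref{lem:Matrix-Sorting}: column order is never destroyed while rows are being sorted.)

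The main obstacle is making the invariant in (B) robust to the bookkeeping of the wave. One must verify that level vertices never exceed two chips under the volatility-minimizing order, and handle the rounds in which the center deposits a new chip onto a level-$1$ vertex that still retains a chip from a previous round (as happens already for $k=1,\,m=3$, where after round two level $1$ is reoccupied). Phrasing the invariant through the two order statistics $\min$ and $\max$—rather than through ``one sorted chip per branch''—is exactly what lets the induction absorb these accumulations, since the min and max of two $i$-increasing layers remain $i$-increasing. Once the invariant is secured, (A) and (B) identify the set of volatility-minimizing configurations with the image of $\psi$, and $\psi^{-1}$ is the desired bijection with SYT of shape $(m)^k$.
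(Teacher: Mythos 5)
Your proposal is correct and follows essentially the same route as the paper: both pin down the deterministic ``two center fires, then an outward wave'' vertex order with at most two chips per branch vertex, both show column-sortedness is preserved while Theorem \ref{thm:sorting} sorts the rows (your $\min$/$\max$ order-statistic invariant is just a step-by-step dynamic restatement of the paper's appeal to Lemma \ref{lem:Matrix-Sorting} and the no-mixing observation at the endgame fires), and both obtain surjectivity by reusing the column-by-column firing sequence from Theorem \ref{SYT-to-stable-only}. If anything, carrying the invariant through every step rather than invoking it only at the endgame transition makes your version slightly more airtight, but it is the same argument.
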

    
    \begin{proof} For this proof we again use the function $\psi$ to translate between SYT and chip configurations. Let us analyze the tableau that forms as we fire from $\Delta^{k,m}$. The first fire fills in the first column of an empty $k\times m$ matrix where the $i^{th}$ row's entry represents the chip on the $i^{th}$ ``smallest'' branch. Now, after the second fire we have two entries in the same spot for each position in the first column of our matrix. However, now we must fire each branch one time. The result is a new matrix with an empty first column and filled second column. Indeed, the fires sent half of the chips back to the center, and the other half to the next spot on each branch (column $2$). Note that Lemma \ref{lem:Matrix-Sorting} guarantees that column $2$ is still ordered.
    
    Now we note that under our firing strategy, at most $2$ chips will be on any given branch vertex at a time. Thus for a fixed $j$, the sequential firing of the vertices in $\{v_{1,j},\ldots, v_{k,j}\}$ can be viewed as a row sorting operation in the same way as in the previous paragraph. Thus at each stage in the firing process in which no vertices other than the origin have more than one chip, the columns of the matrix will be sorted.
    
    Now we note what happens after we enter the poset of endgame fires at $v_{i,0}^{m-1}$. From this point onward no vertex will have a number of chips greater than its degree. Thus any fires that occur on the center vertex from this point onward will come from firing the first vertex on each branch. However, by Lemma \ref{lem:Matrix-Sorting} we know that the collection of these fires will sort the chips into two increasing columns, so the chips that enter the center will never mix, but instead return to the branches from which they came. Now, we utilize the proof of Theorem \ref{thm:sorting} and Lemma \ref{lem:Matrix-Sorting} again. We have $k$ columns that are sorted and we know that chip-firing from this point onward will not ``mix'' chips between branches. That is to say, all chip firing does from this point forward is permute entries in rows. Thus we have a column sorted matrix that will become row sorted via Theorem \ref{thm:sorting} and does so by only permuting entries that lie in the same row. Thus by Lemma \ref{lem:Matrix-Sorting}, the resulting matrix must be an SYT.
    
    As for surjectivity, given any SYT, we can fire to it on the following way: Fire the largest column first, followed by the second largest column. Then fire the branch vertices. This results in the largest column being pushed out and the second column going back into the center. Refire the second column followed by the third largest column. Continuing this way it is easy to see that we obtain the desired SYT.
    \end{proof}

    \justify
    \section{Future Questions}
    
    A natural follow-up is to describe and enumerate \textit{all} stable configurations of $\Delta^{k,m}$ when $m\neq2$. This is easily done when either $k=1$ or $m=1$. Theorem \ref{thm:sorting} guarantees that there is only one reachable stable configuration in the $k=1$ case, while Proposition \ref{prop:level1andm} guarantees the same for the $m=1$ case. However, when $k>1$ and $m>2$, this task is more difficult.
    
    We know that all reachable stable configurations can be mapped to tableaux of shape $(m)^k$ (Theorem \ref{thm:stable-shape}) which are row-increasing and have their first and last columns increasing (Theorem \ref{thm:sorting} and Proposition \ref{prop:level1andm}), but this is not enough to categorize all stable configurations. For example, there are 20 Young tableaux of shape $(2)^4$ that meet these criteria, but only 16 stable configurations reachable from $\Delta^{2,4}$. The tableaux that do not correspond to stable configurations reachable from $\Delta^{2,4}$ are shown in Figure \ref{fig:fourtableaux}. We have yet to find a bijection between general stable configurations and an existing combinatorial object.
    \begin{figure}[ht]
        \centering
        \[\begin{array}{cccc}
            \begin{ytableau}
                1 & 2 & \BLUE{6} & 7 \\
                3 & 4 & \BLUE{5} & 8
            \end{ytableau} & \begin{ytableau}
                1 & 3 & \BLUE{6} & 7 \\
                2 & 4 & \BLUE{5} & 8
            \end{ytableau} & \begin{ytableau}
                1 & \BLUE{4} & \BLUE{6} & 7 \\
                2 & \BLUE{3} & \BLUE{5} & 8
            \end{ytableau} & \begin{ytableau}
                1 & \BLUE{5} & \BLUE{6} & 7 \\
                2 & \BLUE{3} & \BLUE{4} & 8
            \end{ytableau}
        \end{array}\]
        \caption{\justifying Tableaux of shape $(2)^4$ which are row-sorted and have first and last columns sorted but do not correspond to stable configurations reachable from $\Delta^{2,4}$. Non-increasing columns are highlighted in blue.}
        \label{fig:fourtableaux}
    \end{figure}
    
    Another question that came up is one of probability: What is the probability that when randomizing the firing process one reaches a stable configuration corresponding to an SYT? We have the following conjecture:
    
    \begin{conjecture}
        Suppose we play the labeled chip-firing game at random. That is, each firable vertex has the same probability of firing at a given stage, and once a vertex is chosen, each chip on the vertex is equally likely to fire. In this situation, the most likely stable configuration is the one where chips $(1)$ through $(m)$ land on branch 1, chips $(m+1)$ through $(2m)$ on branch 2, \ldots, and chips $\left((k-1)m+1\right)$ through $(km)$ on branch $k$. We refer to this as the ``totally sorted'' configuration for $\Delta^{k,m}$. Further, SYT configurations are more likely to occur than non-SYT configurations.
    \end{conjecture}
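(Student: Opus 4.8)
The plan is to attack the two assertions of the conjecture---that the totally sorted configuration is a mode of the terminal distribution, and that each SYT configuration is at least as likely as each non-SYT configuration---through a single mechanism: a probability-non-decreasing injection on the space of \emph{random firing histories}. A history $H$ is the complete record of the process's choices (which firable vertex is selected at each step, and which chips participate in each fire), and its probability is a product, over all steps, of $1/(\text{number of firable vertices})$ times $1/(\text{number of admissible chip-subsets at the chosen vertex})$. Since the terminal configuration is a deterministic function of $H$, we have $P(\mathcal{C}) = \sum_{H \to \mathcal{C}} P(H)$, so it suffices to exhibit, for each terminal configuration $\mathcal{C}$ other than the target, a weight-non-decreasing injection from the histories ending at $\mathcal{C}$ into the histories ending at the target; then $\sum_{H\to\mathcal{C}}P(H)$ is dominated term-by-term.

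Before constructing any injection I would use confluence to rigidify the problem. By Corollary \ref{cor:global-confluence} the underlying unlabeled process is confluent, so \emph{every} history fires the same multiset of sites the same number of times and has the same length; the only freedom is the order of fires (constrained by the book poset of Lemma \ref{lem:poset}) and the chip-subset chosen whenever a site is over-full. In particular, condition (2) of Lemma \ref{lem:poset} shows every endgame fire occurs with exactly $\deg(v_{i,j})$ chips present, so it uses all of them and contributes no chip-choice factor; the only chip-choices live in the fixed-length ``build-up'' phase preceding the endgame. I would package this as a lemma giving a clean product form for $P(H)$, which is the raw material the injection must control.

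For the totally sorted claim I would induct on the number of inversions of $\mathcal{C}$ relative to the totally sorted configuration, using the label/level/branch-reversal symmetry---under the simultaneous maps $c\mapsto N+1-c$, $j\mapsto m+1-j$, $i\mapsto k+1-i$ the process is carried to itself and the totally sorted configuration is fixed---to organize and halve the casework. The atomic move locates an adjacent pair of labels sitting in ``wrong'' branches and edits the single center fire that separated them: because a center fire sends the $i$-th smallest of its $k$ chips to branch $i$, swapping the roles of two such chips in that one fire (and correcting the bounded downstream consequences, which Lemma \ref{lem:mixing} keeps from propagating across branches) produces a history ending at a configuration strictly closer to sorted. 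For the SYT-versus-non-SYT claim I would run the same machinery, but with the atomic move resolving an interior column descent $\mathcal{C}(v_{i,j})>\mathcal{C}(v_{i+1,j})$---which by Theorem \ref{thm:sorting} and Proposition \ref{prop:level1andm} is the only obstruction to being an SYT---and mapping each non-SYT configuration to the SYT obtained by a descent-resolving sorting move on that column.

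The main obstacle is exactly the weight-monotonicity of these injections. Swapping chips inside a single center fire is clean in isolation, but that fire's outcome feeds the subsequent dynamics, so altering it changes which vertices are firable at later steps and how many chips sit on over-full vertices; both effects perturb the denominators in $P(H)$, and one must show their net effect never increases the product of denominators (equivalently, never decreases the history's probability). Establishing this requires a genuine local commutation analysis: confining the edit to a bounded window of the history---using poset distance from Lemma \ref{lem:poset} and the non-mixing of Lemma \ref{lem:mixing}---so that all but finitely many factors are untouched and the finitely many changed factors move in the favorable direction. I expect this bounded-window, sign-definite perturbation estimate to be the crux. A reasonable fallback, if the sharp pointwise domination proves intractable, is to first prove the weaker aggregate statements---that the totally sorted configuration beats the average and that the total SYT mass exceeds $1/2$---via an expectation argument showing that each random center fire decreases a suitable ``unsortedness'' potential in expectation, and only then attempt the configuration-by-configuration comparison.
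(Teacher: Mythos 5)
This statement is an open conjecture in the paper: the authors offer no proof, only computational evidence (the SageMath enumeration summarized in Figure \ref{fig:probabilities}), so there is no argument of theirs to compare yours against. Your proposal is a research plan rather than a proof, and by your own admission its crux --- the weight-monotonicity of the history-editing injection --- is left unestablished. That gap is genuine and is exactly where the difficulty of the conjecture lives: editing one center fire changes which vertices are firable and how over-full they are at every subsequent step of the build-up phase, so the factors $1/(\#\text{firable})$ and $1/\binom{C(v)}{\deg v}$ can move in either direction, and nothing in Lemmas \ref{lem:poset} or \ref{lem:mixing} confines the perturbation to a bounded window during the pre-endgame phase (those lemmas only control the final $m-j$ fires of each site). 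You also do not address injectivity: after ``correcting the bounded downstream consequences,'' two distinct histories ending at $\mathcal{C}$ could plausibly collapse to the same history ending at the target, which would break the term-by-term domination.

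A second, more concrete error: the claimed symmetry of the process under $c\mapsto N+1-c$, $j\mapsto m+1-j$, $i\mapsto k+1-i$ does not exist. Label reversal flips which chip moves toward the center in a branch fire, and the map $j\mapsto m+1-j$ is not an automorphism of $\instar(k)$ (the center has degree $k$, the branch vertices degree $2$, and the branches are infinite half-lines), so the dynamics are not carried to themselves; at best the set of terminal configurations, restricted to levels $1$ through $m$, admits such a relabeling. Any casework-halving argument built on this symmetry would be unsound. The parts of your setup that are correct --- the product formula for $P(H)$, the use of confluence to fix the fire multiset, and the observation that endgame fires contribute no chip-choice factor --- are a reasonable foundation, and your fallback of an expectation/potential-function argument for weaker aggregate statements is probably the more tractable first target, but as written the proposal does not prove either assertion of the conjecture.
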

    
    This is based off of data collected from a program written in SageMath 10.4 using Python 3.11.11. The program takes in a pair $(k,m)$ and determines every stable configuration reachable from $\Delta^{k,m}$ by running through every possible stabilization sequence. Additionally, the program records the number of different stabilization sequences ending in each stable configuration. This program has given us the data shown in Figure \ref{fig:probabilities}. Additionally, we have another program that runs the labeled chip-firing game at random as described in the conjecture. We have observed that when we use this program to play the game, we get the totally sorted configuration the most often. In fact, we have observed that configurations that correspond to SYT arise more frequently than those that do not.
    
    \begin{figure}[ht]
        \centering
        \[\begin{array}{c|c|c|c}
            && \#\text{ of stabilization sequences} & \text{total }\#\text{ of} \\
            (k,m) & \text{stable configuration} & \text{to config.} & \text{stabilization sequences} \\
            \hline
            \hline
            (1,1) & \BLUE{[1]} & 1 & 1 \\
            \hline
            (1,2) & \BLUE{[1,2]} & 2 & 2 \\
            \hline
            (1,3) & \BLUE{[1,2,3]} & 60 & 60 \\
            \hline
            \hline
            (2,1) & \BLUE{[1],[2]} & 1 & 1 \\
            \hline
            \multirow{2}{*}{(2,2)} & [1,3],[2,4] & 4 & \multirow{2}{*}{12} \\
            & \BLUE{[1,2],[3,4]} & 8 \\
            \hline
            \multirow{5}{*}{(2,3)} & [1, 3, 5], [2, 4, 6] & 8,568 & \multirow{5}{*}{179,424} \\
            & [1, 2, 5], [3, 4, 6] & 22,680 \\
            & [1, 3, 4], [2, 5, 6] & 22,680 \\
            & [1, 2, 4], [3, 5, 6] & 51,408 \\
            & \BLUE{[1, 2, 3], [4, 5, 6]} & 74,088 \\
            \hline
            \hline
            (3,1) & \BLUE{[1],[2],[3]} & 1 & 1 \\
            \hline
            \multirow{5}{*}{(3,2)} & [1, 4], [2, 5], [3, 6] & 12 & \multirow{5}{*}{120} \\
            & [1, 3], [2, 5], [4, 6] & 12 \\
            & [1, 3], [2, 4], [5, 6] & 24 \\
            & [1, 2], [3, 5], [4, 6] & 24 \\
            & \BLUE{[1, 2], [3, 4], [5, 6]} & 48 \\
        \end{array}\]
        \caption{\justifying Stable configurations reachable from $\Delta^{k,m}$ for selected small $(k,m)$ pairs, as well as the number of stabilization sequences for each configuration. Configurations are read so that the first set of brackets indicates chips on branch 1 ordered from the center outwards, the second set of brackets indicates those on branch 2, and so on. Totally sorted configurations are highlighted in blue.}
        \label{fig:probabilities}
    \end{figure}

    \FloatBarrier

\emergencystretch 2em
\printbibliography[]

\smallskip

\noindent
Annika Gonzalez-Zugasti \\
\textsc{
School of Mathematics, University of Minnesoata - Twin Cities\\
Vincent Hall 206 Church St. SE Minneapolis, MN 55455}\\
\textit{E-mail address: }\href{mailto:gonz0800@umn.edu}{\texttt{gonz0800@umn.edu}}
\medskip

\noindent
Ryan Lynch \\
\textsc{
School of Mathematics, University of Minnesoata - Twin Cities\\
Vincent Hall 206 Church St. SE Minneapolis, MN 55455}\\
\textit{E-mail address: }\href{mailto:rlynch@umn.edu}{\texttt{rlynch@umn.edu}}
\medskip

\noindent
Dylan Snustad \\
\textsc{
School of Mathematical and Statistical Sciences, Arizona State University\\
901 S Palm Walk, Tempe, AZ 85287}\\
\textit{E-mail address: }\href{mailto:dsnustad@asu.edu}{\texttt{dsnustad@asu.edu}}
\medskip

\end{document}